\providecommand{\U}[1]{\protect\rule{.1in}{.1in}}
\newtheorem{theorem}{Theorem}
\theoremstyle{plain}
\newtheorem{corollary}{Corollary}
\newtheorem{definition}{Definition}
\newtheorem{proposition}{Proposition}
\newtheorem{remark}{Remark}
\numberwithin{equation}{section}
\begin{document}
\title[{\normalsize Generalization of 2-absorbing quasi primary ideals}]{{\normalsize Generalization of 2-absorbing quasi primary ideals}}
\author{Emel Aslankarayi\u{g}it U\u{g}urlu}
\address{Department of Mathematics, Marmara University, Istanbul, Turkey.}
\email{emel.aslankarayigit@marmara.edu.tr}
\author{\"{U}nsal Tekir}
\curraddr{Department of Mathematics, Marmara University, Istanbul, Turkey.}
\email{utekir@marmara.edu.tr}
\author{Suat Ko\c{c}}
\address{Department of Mathematics, Marmara University, Istanbul, Turkey.}
\email{suat.koc@marmara.edu.tr}

\begin{abstract}
In this article, we introduce and study the concept of $\phi$-2-absorbing
quasi primary ideals in commutative rings. Let $R\ $be a commutative ring with
a nonzero identity and $L(R)$ be the lattice of all ideals of $R.\ $Suppose
that $\phi:L(R)\rightarrow L(R)\cup\left\{  \emptyset\right\}  $ is a
function. A proper ideal $I\ $of $R\ $is called a $\phi$-2-absorbing quasi
primary ideal of $R\ $if $a,b,c\in R$ and whenever $abc\in I-\phi(I),$ then
either $ab\in\sqrt{I}\ $or $ac\in\sqrt{I}\ $or $bc\in\sqrt{I}.\ $In addition
to giving many properties of $\phi$-2-absorbing quasi primary ideals, we also
use them to characterize von Neumann regular rings.

\bigskip

Keywords. weakly 2-absorbing quasi primary ideal, $\phi$-2-absorbing quasi primary
ideal, von Neumann regular ring.

\bigskip

2010 Mathematics Subject Classification. 13A15, 16E50

\end{abstract}
\maketitle

\section{\textbf{Introduction}}

In this article, we focus only on commutative rings with nonzero identity and
nonzero unital modules. Let $R$ always denote such a ring and $M\ $denote such
an $R$-module. $L(R)$ denotes the lattice of all ideals of $R.\ $Let $I$ be a
proper ideal of $R$, the set $\{r\in R$ $|$ $rs\in I$ for some $s\in
R\diagdown I\}$ will be denoted by $Z_{I}(R)$. Also the radical of $I$ is
defined as $\sqrt{I}:=\{r\in R$ $|$ $r^{k}\in I$ for some $k\in%
\mathbb{N}
\}$ and for $x\in R,$ $(I:x)$ denote the ideal $\{r\in R$ $|$ $rx\in I\}$ of
$R$. A proper ideal $I$ of a commutative ring $R$ is \textit{prime }if
whenever $a_{1},a_{2}\in R$ with $a_{1}a_{2}\in I$, then $a_{1}\in I$ or
$a_{2}\in I,$ \cite{atiyah}. In 2003, the authors \cite{AS} said that if
whenever $a_{1},a_{2}\in R$ with $0_{R}\neq a_{1}a_{2}\in I$, then $a_{1}\in
I$ or $a_{2}\in I,$ a proper ideal $I$ of a commutative ring $R$ is
\textit{weakly prime}. In \cite{BS}, Bhatwadekar and Sharma defined a proper
ideal $I$ of an integral domain $R$ as \textit{almost prime (resp. $n$-almost
prime)} if for $a_{1},a_{2}\in R$ with $a_{1}a_{2}\in I-I^{2}$, (resp.
$a_{1}a_{2}\in I-I^{n},n\geq3$) then $a_{1}\in I$ or $a_{2}\in I$. This
definition can be made for any commutative ring $R$. Later, Anderson and
Batanieh \cite{AB} introduced a concept which covers all the previous
definitions in a commutative ring $R$ as following: Let $\phi:L(R)\rightarrow
L(R)\cup\{\emptyset\}$ be a function, where $L(R)$ denotes the set of all
ideals of $R$. A proper ideal $I$ of a commutative ring $R$ is called $\phi
$-\textit{prime} if for $a_{1},a_{2}\in R$ with $a_{1}a_{2}\in I-\phi(I)$,
then $a_{1}\in I$ or $a_{2}\in I.$ They defined the map $\phi_{\alpha
}:L(R)\rightarrow L(R)\cup\{\emptyset\}$ as follows:

\begin{itemize}
\item[(i)] $\phi_{\emptyset}$ : $\phi(I)=\emptyset$ defines prime ideals.

\item[(ii)] $\phi_{0}$ : $\phi(I)=\{0_{R}\}$ defines weakly prime ideals.

\item[(iii)] $\phi_{2}$ : $\phi(I)=I^{2}$ defines almost prime ideals.

\item[(iv)] $\phi_{n}$ : $\phi(I)=I^{n}$ defines $n$-almost prime
ideals$(n\geq2).$

\item[(v)] $\phi_{\omega}$ : $\phi(I)=\cap_{n=1}^{\infty}I^{n}$ defines
$\omega$-prime ideals.

\item[(vi)] $\phi_{1}$ : $\phi(I)=I$ defines any ideal.
\end{itemize}

\bigskip

\bigskip The notion of 2-absorbing ideal, which is a generalization of prime
ideal, was introduced by Badawi as the following: a proper ideal $I$ of $R$ is
called a 2-absorbing ideal of $R$ if whenever $a,b,c\in R$ and $abc\in I$,
then $ab\in I$ or $ac\in I$ or $bc\in I$, see \cite{Ba}. Also, the notion is
investigated in \cite{AnBa}, \cite{PaBa}, \cite{DaPu}, \cite{Da} and
\cite{DaSo}. Then, the notion of 2-absorbing primary ideal, which is a
generalization of primary ideal, was introduced in \cite{BaTeYe} as: a proper
ideal $I$ of $R$ is called a 2-absorbing primary ideal of $R$ if whenever
$a,b,c\in R$ and $abc\in I$, then $ab\in I$ or $ac\in\sqrt{I}$ or $bc\in
\sqrt{I}$. Note that a 2-absorbing ideal of a commutative ring $R$ is a
2-absorbing primary ideal of $R$. But the converse is not true. For example,
consider the ideal $I=$ $(20)$ of $%
\mathbb{Z}
$. Since $2.2.5\in I$, but $2.2\notin I$ and $2.5\notin I$, $I$ is not a
2-absorbing ideal of $%
\mathbb{Z}
$. However, it is clear that $I$ is a 2-absorbing primary ideal of $%
\mathbb{Z}
$, since $2.5\in\sqrt{I}$. In 2016, the authors introduced the concept of
$\phi$-2-absorbing primary ideal which a proper ideal $I$ of $R$ is called a
$\phi$-2-absorbing primary ideal of $R$ if whenever $a,b,c\in R$ and $abc\in
I-\phi(I)$, then $ab\in I$ or $ac\in\sqrt{I}$ or $bc\in\sqrt{I}$, see
\cite{turkish}.

\bigskip

On the other hand, the concept of quasi primary ideals in commutative rings
was introduced and investigated by Fuchs in \cite{Fu}. The author called an
ideal $I$ of $R$ as a quasi primary ideal if $\sqrt{I}$ is a prime ideal. In
\cite{qua}, the notion of 2-absorbing quasi primary ideal is introduced as
following: a proper ideal $I$ of $R$ to be 2-absorbing quasi primary if
$\sqrt{I}$ is a 2-absorbing ideal of $R$.

\bigskip

In this paper, our aim to obtain the generalizations of the concept of the
quasi primary ideal and 2-absorbing quasi primary ideal. For this, firstly we
define the $\phi$-quasi primary ideal. Let $\phi:L(R)\rightarrow
L(R)\cup\left\{  \emptyset\right\}  $ be a function and $I\ $\ be a proper
ideal of $R.$ Then $I$ is said to be a $\phi$-quasi primary ideal if whenever
$a,b\in R$ and $ab\in I-\phi(I)$, then $a\in\sqrt{I}$ or $b\in\sqrt{I}$.
Similarly, $I$ is called a $\phi$-2-absorbing quasi primary ideal if whenever
$a,b,c\in R$ and $abc\in I-\phi(I)$, then $ab\in\sqrt{I}$ or $ac\in\sqrt{I}$
or $bc\in\sqrt{I}$. In Section 2, firstly we investigate the basic properties
of a $\phi$-quasi primary ideal and a $\phi$-2-absorbing quasi primary. By the
help of Theorem \ref{t1} and Theorem \ref{t2}, we give a diagram which
clarifies the place of $\phi$-2-absorbing quasi primary ideal in the lattice
of all ideals $L(R)\ $of $R,$ see Figure 1. In Proposition \ref{method}, we
give a method for constructing $\phi$-2-absorbing quasi primary ideal in
commutative rings. Also, if $\phi(I)$ is a quasi primary ideal of $R,$ we
prove that $I\ $is a $\phi$-2-absorbing quasi primary ideal of
$R\Leftrightarrow$ $I\ $is a 2-absorbing quasi primary ideal of $R,$ see
Corollary \ref{equivalent}. With Theorem \ref{nakayama}, we obtain the
Nakayama%
\'{}%
s Lemma for weakly (2-absorbing) quasi primary ideals. Moreover, we examine
the notion of "$\phi$-2-absorbing quasi primary ideal" in $S^{-1}R,$ where $S$
is a multiplicatively subset of $R$. In Theorem \ref{idealization}, we
characterize the weakly 2-absorbing quasi primary ideal of $R\propto M,$ that
is, the trivial ring extension, where $M\ $is an $R$-module. In Theorem
\ref{VonNue}, we describe von Neumann regular rings in terms of $\phi
$-2-absorbing quasi primary ideals. Finally, with the all results of the
Section 3, we characterize $\phi$-2-absorbing quasi primary ideal in direct
product of finitely many commutative rings.

\bigskip

\bigskip

\bigskip

\bigskip

\section{\textbf{Characterization of }$\phi$\textbf{-2-absorbing quasi primary
ideals}}

Throughout the paper, $\phi:L(R)\rightarrow L(R)\cup\left\{  \emptyset
\right\}  $ always denotes a function.

\begin{definition}
Let $R\ $be a ring and $I\ $a proper ideal of $R.\ $

(i)\ $I$ is said to be a $\phi$-quasi primary ideal if whenever $a,b\in R$ and
$ab\in I-\phi(I)$, then $a\in\sqrt{I}$ or $b\in\sqrt{I}$.

(ii) $I$ is said to be a $\phi$-2-absorbing quasi primary ideal if whenever
$a,b,c\in R$ and $abc\in I-\phi(I)$, then $ab\in\sqrt{I}$ or $ac\in\sqrt{I}$
or $bc\in\sqrt{I}$.
\end{definition}

\begin{definition}
Let $\phi_{\alpha}:L(R)\rightarrow L(R)\cup\{\emptyset\}$ be one of the
following special functions and $I\ $a $\phi_{\alpha}$-quasi primary
($\phi_{\alpha}$-2-absorbing quasi primary) ideal of $R.\ $Then,%
\begin{align*}
\phi_{\emptyset}(I)  &  =\emptyset\ \ \ \ \ \text{quasi primary (2-absorbing
quasi primary) ideal}\\
\phi_{0}(I)  &  =0_{R}\ \ \ \text{weakly quasi primary (weakly 2-absorbing
quasi primary) ideal}\\
\phi_{2}(I)  &  =I^{2}\ \ \ \text{almost quasi primary (almost 2-absorbing
quasi primary) ideal}\\
\phi_{n}(I)  &  =I^{n}\ \ \ n\text{-almost quasi primary (}n\text{-almost
2-absorbing quasi primary) ideal\ }(n\geq2)\\
\phi_{\omega}(I)  &  =\cap_{n=1}^{\infty}I^{n}\ \ \ \omega\text{-quasi primary
(}\omega\text{-2-absorbing quasi primary) ideal}\\
\phi_{1}(I)  &  =I\ \ \ \text{any ideal.}%
\end{align*}

\end{definition}

Note that since $I-\phi(I)=I-(I\cap\phi(I))$, for any ideal $I$ of $R$,
without loss of generality, assume that $\phi(I)\subseteq I.$ Let $\psi_{1},$
$\psi_{2}:L(R)\rightarrow L(R)\cup\{\emptyset\}$ be two functions, if
$\psi_{1}(I)\subseteq\psi_{2}(I)$ for each $I\in L(R),$ we denote $\psi
_{1}\leq\psi_{2}.$ Thus clearly, we have the following order: $\phi
_{\emptyset}\leq\phi_{0}\leq\phi_{\omega}\leq\cdots\leq\phi_{n+1}\leq\phi
_{n}\leq\cdots\leq\phi_{2}\leq\phi_{1}$.\ Also $2$-almost quasi primary
($2$-almost 2-absorbing quasi primary) ideals are exactly almost quasi primary
(almost 2-absorbing quasi primary) ideals.

\begin{proposition}
\label{pp}Let $R$\ be a ring and $I$ be a proper ideal $R.$ Let $\psi_{1}%
,\psi_{2}:L(R)\rightarrow L(R)\cup\{\emptyset\}$ be two functions with
$\psi_{1}\leq\psi_{2}.$

(i) If $I$ is a $\psi_{1}$-quasi primary ideal of $R,$ then $I$ is a $\psi
_{2}$-quasi primary ideal of $R.$

(ii)$\ I$ is a quasi primary ideal $\Rightarrow$ $I$ is a weakly quasi primary
ideal $\Rightarrow$ $I$ is an $\omega$-quasi primary ideal $\Rightarrow$ $I$
is an $(n+1)$-almost quasi primary ideal $\Rightarrow$ $I$ is an $n$-almost
quasi primary ideal $(n\geq2)\Rightarrow$ $I$ is an almost quasi primary ideal.

(iii)$\ I$ is an $\omega$-quasi primary ideal if and only if $I$ is an
$n$-almost quasi primary ideal for each $n\geq2.$

(iv) If $I$ is a $\psi_{1}$-2-absorbing quasi primary ideal of $R,$ then $I$
is a $\psi_{2}$-2-absorbing quasi primary ideal of $R.$

(v) $I$ is 2-absorbing quasi primary ideal $\Rightarrow$ $I$ is weakly
2-absorbing quasi primary ideal $\Rightarrow$ $I$ is an $\omega$-2-absorbing
quasi primary ideal $\Rightarrow$ $I$ is an $(n+1)$-almost 2-absorbing quasi
primary ideal $\Rightarrow$ $I$ is an $n$-almost 2-absorbing quasi primary
ideal $(n\geq2)\Rightarrow$ $I$ is an almost 2-absorbing quasi primary ideal.

(vi) $I$ is an $\omega$-2-absorbing quasi primary ideal if and only if $I$ is
an $n$-almost 2-absorbing quasi primary ideal for each $n\geq2.$
\end{proposition}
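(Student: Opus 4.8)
The entire proposition rests on one elementary set-theoretic observation: if $\psi_1 \leq \psi_2$, i.e.\ $\psi_1(I) \subseteq \psi_2(I)$, then removing a larger set leaves a smaller one, so $I - \psi_2(I) \subseteq I - \psi_1(I)$. The plan for (i) is therefore immediate. Suppose $I$ is $\psi_1$-quasi primary and take $a,b \in R$ with $ab \in I - \psi_2(I)$. By the observation $ab \in I - \psi_1(I)$, and the $\psi_1$-hypothesis yields $a \in \sqrt{I}$ or $b \in \sqrt{I}$; hence $I$ is $\psi_2$-quasi primary. Part (iv) is proved verbatim, replacing the product $ab$ by the triple product $abc$ and the conclusion ``$a \in \sqrt{I}$ or $b \in \sqrt{I}$'' by ``$ab \in \sqrt{I}$ or $ac \in \sqrt{I}$ or $bc \in \sqrt{I}$''; no new idea is needed.

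Parts (ii) and (v) then follow purely formally. The chain of functions $\phi_\emptyset \leq \phi_0 \leq \phi_\omega \leq \cdots \leq \phi_{n+1} \leq \phi_n \leq \cdots \leq \phi_2 \leq \phi_1$ was already recorded before the statement, so applying (i) (respectively (iv)) to each consecutive pair in this chain gives the displayed string of implications directly. I would simply invoke (i) and (iv) link by link.

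The only step carrying a genuine (if small) idea is the backward direction of (iii) and (vi). The forward direction is another instance of (i)/(iv), since $\phi_\omega \leq \phi_n$ for every $n \geq 2$. For the converse of (iii), assume $I$ is $n$-almost quasi primary for every $n \geq 2$ and take $ab \in I - \phi_\omega(I) = I - \bigcap_{n=1}^{\infty} I^n$. Since $ab \in I = I^1$, the element $ab$ lies in $I^1$ but not in the full intersection, so there is a least index $m$ with $ab \notin I^m$, and necessarily $m \geq 2$. Thus $ab \in I - I^m$ with $m \geq 2$, and the $m$-almost quasi primary hypothesis gives $a \in \sqrt{I}$ or $b \in \sqrt{I}$. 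The converse of (vi) is identical with $abc$ in place of $ab$.

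The main thing to get right --- and the only place where the argument is not pure bookkeeping --- is this indexing step in (iii)/(vi): one must observe that membership in $I$ forces the failing exponent $m$ to be at least $2$, so that the available $n$-almost hypotheses ($n \geq 2$) actually apply. Everything else reduces to the monotonicity of set difference under the order $\leq$ on functions.
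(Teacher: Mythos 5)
Your proposal is correct and follows essentially the same route as the paper: (i) and (iv) from the monotonicity of set difference under $\psi_1\leq\psi_2$, (ii) and (v) by chaining these along $\phi_\emptyset\leq\phi_0\leq\phi_\omega\leq\cdots\leq\phi_2\leq\phi_1$, and the converse of (iii)/(vi) by noting that $ab\in I-\bigcap_{n=1}^{\infty}I^{n}$ forces $ab\in I-I^{n}$ for some $n\geq2$. In fact you supply the small details the paper leaves implicit, namely the inclusion $I-\psi_2(I)\subseteq I-\psi_1(I)$ and the observation that the failing exponent must be at least $2$ because $ab\in I=I^{1}$.
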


\begin{proof}
(i): It is evident.

(ii): Follows from (i).

(iii): Every $\omega$-quasi primary ideal is an $n$-almost quasi primary ideal
for each $n\geq2\ $since $\phi_{\omega}\leq\phi_{n}.\ $Now, let $I\ $be an
$n$-almost quasi primary ideal for each $n\geq2.\ $Choose elements $a,b\in R$
such that$\ ab\in I-$ $\cap_{n=1}^{\infty}I^{n}.\ $Then we have $ab\in
I-I^{n}\ $for some $n\geq2.\ $Since $I\ $is an $n$-almost quasi primary ideal
of $R,\ $we conclude either $a\in\sqrt{I}$ or $b\in\sqrt{I}.\ $Therefore,
$I\ $is an $\omega$-quasi primary ideal.

(iv): It is evident.

(v): Follows from (iv).

(vi): Similar to (iii).
\end{proof}

\begin{theorem}
\label{t1}(i) If $\sqrt{I}=I,\ $then $I\ $is a $\phi$-2-absorbing quasi
primary ideal of $R\ $if and only if $I\ $is a $\phi$-2-absorbing ideal of
$R.$

(ii)\ If $I$ is a $\phi$-2-absorbing quasi primary ideal of $R$ and
$\phi(\sqrt{I})=\sqrt{\phi(I)}$, then $\sqrt{I}$ is a $\phi$-2-absorbing ideal
of $R$.

(iii)\ If $\sqrt{I}$ is a $\phi$-2-absorbing ideal of $R$ and $\phi(\sqrt
{I})\subseteq\phi(I),$ then $I$ is a $\phi$-2-absorbing quasi primary ideal of
$R.$

(iv) If $I$ is a $\phi$-quasi primary ideal of $R$ and $\phi(\sqrt{I}%
)=\sqrt{\phi(I)}$, then $\sqrt{I}$ is a $\phi$-prime ideal of $R$.

(v) If $\sqrt{I}$ is a $\phi$-prime ideal of $R$ and $\phi(\sqrt{I}%
)\subseteq\phi(I),$ then $I$ is a $\phi$-quasi primary ideal of $R.$
\end{theorem}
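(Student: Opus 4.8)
The plan is to dispatch all five parts with the same two recurring moves: passing between $I$ and $\sqrt I$ via the idempotency $\sqrt{\sqrt I}=\sqrt I$, and tracking membership in the various $\phi$-sets. Parts (iv) and (v) are the one-factor-fewer analogues of (ii) and (iii), so I would carry out (i)--(iii) in detail and then obtain (iv) and (v) by repeating the same reasoning with two elements $a,b$ in place of the triple $a,b,c$.

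For part (i), when $\sqrt I=I$ the radical conditions $ab\in\sqrt I$, $ac\in\sqrt I$, $bc\in\sqrt I$ in the definition of a $\phi$-2-absorbing quasi primary ideal become literally the membership conditions $ab\in I$, $ac\in I$, $bc\in I$ of a $\phi$-2-absorbing ideal, while the hypothesis $abc\in I-\phi(I)$ is word-for-word the same in both definitions (and $\phi(\sqrt I)=\phi(I)$ automatically). Hence the two notions coincide verbatim, and no further argument is required.

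For part (ii), I would take $a,b,c\in R$ with $abc\in\sqrt I-\phi(\sqrt I)$ and aim to place one of $ab,ac,bc$ in $\sqrt I$. Choosing $k$ with $(abc)^{k}=a^{k}b^{k}c^{k}\in I$, the crucial observation is that $abc\notin\phi(\sqrt I)=\sqrt{\phi(I)}$ forces $(abc)^{k}\notin\phi(I)$, so that $a^{k}b^{k}c^{k}\in I-\phi(I)$; applying the $\phi$-2-absorbing quasi primary hypothesis to the triple $(a^{k},b^{k},c^{k})$ yields, say, $a^{k}b^{k}=(ab)^{k}\in\sqrt I$, whence $ab\in\sqrt{\sqrt I}=\sqrt I$, and symmetrically in the remaining two cases. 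Part (iii) runs in the opposite direction and is lighter: from $abc\in I-\phi(I)\subseteq\sqrt I$, the containment $\phi(\sqrt I)\subseteq\phi(I)$ guarantees $abc\notin\phi(\sqrt I)$, so $abc\in\sqrt I-\phi(\sqrt I)$, and the $\phi$-2-absorbing property of $\sqrt I$ delivers the conclusion immediately, with no radical-extraction step.

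The step I expect to require the most care is the implication $abc\notin\sqrt{\phi(I)}\Rightarrow a^{k}b^{k}c^{k}\notin\phi(I)$ used in (ii) (and its two-variable form in (iv)): this is exactly where the hypothesis $\phi(\sqrt I)=\sqrt{\phi(I)}$ is invoked, since one must convert the exclusion $abc\notin\phi(\sqrt I)$ into an exclusion from $\sqrt{\phi(I)}$ before extracting the power. Everything else reduces to the idempotency $\sqrt{\sqrt I}=\sqrt I$ and to the inclusion $\phi(\sqrt I)\subseteq\phi(I)$ supplied by hypothesis in (iii) and (v), so I anticipate no genuine obstacle beyond careful bookkeeping of the $\phi$-conditions.
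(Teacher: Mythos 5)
Your proposal is correct and follows essentially the same route as the paper: part (ii) by raising $abc$ to a power, using $\phi(\sqrt{I})=\sqrt{\phi(I)}$ to conclude $a^{k}b^{k}c^{k}\notin\phi(I)$, applying the quasi primary hypothesis to $(a^{k},b^{k},c^{k})$ and extracting radicals; part (iii) by the direct containment $abc\in I-\phi(I)\subseteq\sqrt{I}-\phi(\sqrt{I})$; and (i), (iv), (v) as the evident case and the two-element analogues, exactly as in the paper.
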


\begin{proof}
(i): It is evident.

(ii):\ Let $I$ be a $\phi$-2-absorbing quasi primary ideal of $R.$ Take
$a,b,c\in R$ such that $abc\in\sqrt{I}-\phi(\sqrt{I})$. Then there exists a
positive integer $n$ such that $(abc)^{n}=a^{n}b^{n}c^{n}\in I.$ Since
$abc\notin\phi(\sqrt{I})$ and $\phi(\sqrt{I})=\sqrt{\phi(I)},$ we get
$abc\notin\sqrt{\phi(I)},$ so $a^{n}b^{n}c^{n}\notin\phi(I).$ Thus, by our
hypothesis, $a^{n}b^{n}\in\sqrt{I}$ or $b^{n}c^{n}\in\sqrt{I}$ or $a^{n}%
c^{n}\in\sqrt{I}$. Consequently, $ab\in\sqrt{I}$ or $bc\in\sqrt{I}$ or
$ac\in\sqrt{I}$.

(iii):\ Assume that $\sqrt{I}$ is a $\phi$-2-absorbing ideal of $R.$ Choose
$a,b,c\in R$ such that $abc\in I-\phi(I)$. Since $I\subseteq\sqrt{I}$ and
$\phi(\sqrt{I})\subseteq\phi(I),$ we have $abc\in\sqrt{I}-\phi(\sqrt{I}).$
Then as $\sqrt{I}$ is $\phi$-2-absorbing, $ab\in\sqrt{I}$ or $bc\in\sqrt{I}$
or $ac\in\sqrt{I}$ . So $I$ is a $\phi$-quasi primary ideal of $R.$

(iv):\ It is similar to (i).

(v): It is similar to (ii).
\end{proof}

\begin{theorem}
\label{t2}(i) Every $\phi$-quasi primary ideal is a $\phi$-2-absorbing primary ideal.

(ii)\ Every $\phi$-2-absorbing primary ideal is a $\phi$-2-absorbing quasi
primary ideal.

(iii)\ Every $\phi$-quasi primary ideal is a $\phi$-2-absorbing quasi primary ideal.
\end{theorem}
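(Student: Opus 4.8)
The plan is to treat (ii) and (iii) as essentially formal consequences and to concentrate the work on (i), which is the only part where the shape of the conclusion genuinely changes. Two elementary facts will be used repeatedly: that $I\subseteq\sqrt{I}$, and that $\sqrt{I}$ is itself an ideal, so that membership $x\in\sqrt{I}$ is preserved under multiplication by any element of $R$. Both definitions in play, $\phi$-quasi primary and $\phi$-2-absorbing primary, are taken with respect to the same function $\phi$, so the exceptional set $\phi(I)$ matches up throughout and the condition $abc\in I-\phi(I)$ transfers without change when I regroup the product $abc$.

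For (i), I would let $I$ be $\phi$-quasi primary and suppose $a,b,c\in R$ satisfy $abc\in I-\phi(I)$. The key move is to regroup the triple product as $a\cdot(bc)$ and apply the defining property of a $\phi$-quasi primary ideal to the \emph{pair} $(a,bc)$; since the product $a(bc)=abc$ lies in $I-\phi(I)$, this yields $a\in\sqrt{I}$ or $bc\in\sqrt{I}$. In the second case $bc\in\sqrt{I}$ is already the third disjunct required by the $\phi$-2-absorbing primary definition. In the first case, $a\in\sqrt{I}$ together with the fact that $\sqrt{I}$ absorbs multiplication gives $ac\in\sqrt{I}$, which is the second disjunct. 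Either way the conclusion holds, so $I$ is $\phi$-2-absorbing primary.

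For (ii), I would let $I$ be $\phi$-2-absorbing primary and take $a,b,c\in R$ with $abc\in I-\phi(I)$. The hypothesis gives $ab\in I$ or $ac\in\sqrt{I}$ or $bc\in\sqrt{I}$; since $I\subseteq\sqrt{I}$, the first alternative already implies $ab\in\sqrt{I}$, so in every case one of $ab,ac,bc$ lies in $\sqrt{I}$, which is exactly the $\phi$-2-absorbing quasi primary conclusion. Part (iii) then follows immediately by composing (i) and (ii), or can be obtained directly by the same regrouping $a\cdot(bc)$ used in (i), since landing in $a\in\sqrt{I}$ now gives $ab\in\sqrt{I}$ directly.

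I do not expect a serious obstacle here; the one point demanding care is the mismatch in (i) between the target alternative ``$ab\in I$'' (membership in $I$ itself, not its radical) and the radical memberships that the $\phi$-quasi primary hypothesis actually produces. The regrouping $a\cdot(bc)$ is chosen precisely to sidestep this: it routes every case into the two radical alternatives $ac\in\sqrt{I}$ or $bc\in\sqrt{I}$ and never appeals to the $ab\in I$ branch, so no strengthening from $\sqrt{I}$ back down to $I$ is ever needed. The only thing to verify formally is that this regrouping leaves the hypothesis $abc\in I-\phi(I)$ intact, which is clear because the underlying product is unchanged.
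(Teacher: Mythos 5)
Your proposal is correct and follows essentially the same route as the paper: for (i) the paper also regroups $abc=a(bc)$, applies the $\phi$-quasi primary hypothesis to the pair $(a,bc)$, and passes from $a\in\sqrt{I}$ to $ac\in\sqrt{I}$ using that $\sqrt{I}$ is an ideal, while (ii) is noted as clear (your $I\subseteq\sqrt{I}$ observation) and (iii) is obtained by composing (i) and (ii). No discrepancies to report.
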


\begin{proof}
(i):\ Let $I$ be a $\phi$-quasi primary ideal and choose $a,b,c\in R\ $such
that $abc=a(bc)\in I-\phi(I).\ $Since $I\ $is a $\phi$-quasi primary ideal, we
conclude either $a\in\sqrt{I}$ or $bc\in\sqrt{I}\ .\ $Then we have either
$ac\in\sqrt{I}\ $or $bc\in\sqrt{I}\ $which completes the proof.

(ii):\ It is clear.

(iii):\ It follows from (i) and (ii).
\end{proof}

By Theorem \ref{t1} and Theorem \ref{t2}, we give the following diagram which
clarifies the place of $\phi$-2-absorbing quasi primary ideal in the lattice
of all ideals $L(R)\ $of $R.$

\begin{figure}[h]
	\includegraphics[width=1.1\textwidth, ]{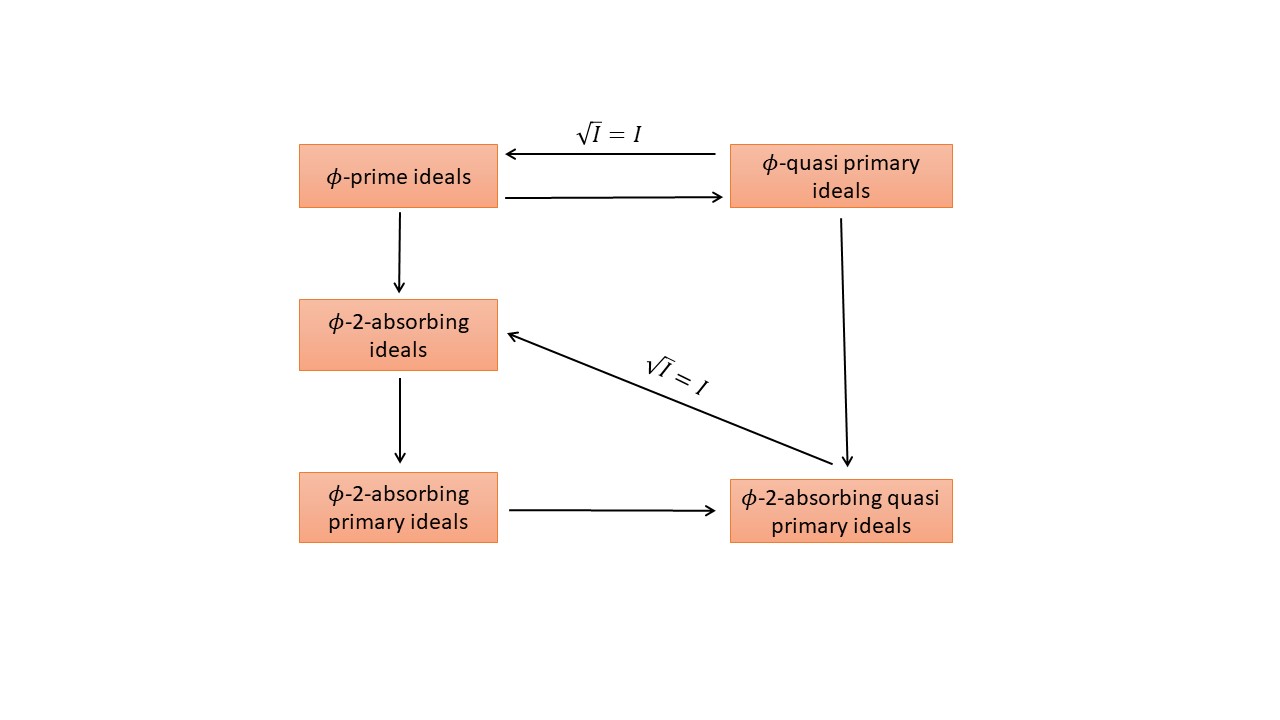}\caption{$\phi
		$-2-absorbing quasi primary ideal vs other classical $\phi$-ideals}%
	\label{fig:figure1}%
\end{figure}

\newpage

\begin{corollary}
If $I$ is a $\phi$-2-absorbing primary ideal of $R$ and $\phi(\sqrt{I}%
)=\sqrt{\phi(I)}$, then $\sqrt{I}$ is a $\phi$-2-absorbing ideal of $R$.
\end{corollary}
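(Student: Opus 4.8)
The plan is to derive this corollary directly by composing the two results just proved, so that no new argument is really needed. First I would invoke Theorem \ref{t2}(ii), which states that every $\phi$-2-absorbing primary ideal is a $\phi$-2-absorbing quasi primary ideal. Applying this to the hypothesis that $I$ is $\phi$-2-absorbing primary immediately upgrades $I$ to a $\phi$-2-absorbing quasi primary ideal of $R$, and crucially this step requires no assumption on $\phi$.

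Next, with $I$ now known to be $\phi$-2-absorbing quasi primary and with the radical-compatibility condition $\phi(\sqrt{I})=\sqrt{\phi(I)}$ holding by hypothesis, I would feed both facts into Theorem \ref{t1}(ii). That theorem concludes precisely that $\sqrt{I}$ is a $\phi$-2-absorbing ideal of $R$, which is exactly the assertion of the corollary. So the whole proof is the two-line chain: $I$ is $\phi$-2-absorbing primary $\Rightarrow$ $I$ is $\phi$-2-absorbing quasi primary (Theorem \ref{t2}(ii)) $\Rightarrow$ $\sqrt{I}$ is $\phi$-2-absorbing (Theorem \ref{t1}(ii)).

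Since the statement is a straightforward composition, there is essentially no obstacle; the only point worth a moment's care is bookkeeping of hypotheses—namely, confirming that Theorem \ref{t2}(ii) needs no condition on $\phi$ while Theorem \ref{t1}(ii) needs exactly $\phi(\sqrt{I})=\sqrt{\phi(I)}$, so that the single hypothesis supplied here suffices for both invocations. For completeness one could instead unwind the argument: starting from $abc\in\sqrt{I}-\phi(\sqrt{I})$, pass to a power so that $(abc)^{n}=a^{n}b^{n}c^{n}\in I$ and, using $\phi(\sqrt{I})=\sqrt{\phi(I)}$, lies outside $\phi(I)$; then apply the $\phi$-2-absorbing primary property of $I$ and take radicals in each of the resulting cases to land in $\sqrt{I}$. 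However, the citation route is cleaner and is what I would actually write.
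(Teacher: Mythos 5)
Your proof is correct and is exactly the paper's own argument: the paper also proves this corollary by composing Theorem \ref{t2}(ii) (to pass from $\phi$-2-absorbing primary to $\phi$-2-absorbing quasi primary) with Theorem \ref{t1}(ii) (using the hypothesis $\phi(\sqrt{I})=\sqrt{\phi(I)}$ to conclude $\sqrt{I}$ is $\phi$-2-absorbing). Your hypothesis bookkeeping is accurate, so nothing further is needed.
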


\begin{proof}
It follows from Theorem \ref{t1}(ii) and Theorem \ref{t2}(ii).
\end{proof}

\begin{proposition}
\label{pq}Let $I\ $be a proper ideal of $R.$\ Then,

(i)\ $I\ $is a $\phi$-quasi primary ideal of $R\ $if and only if $I/\phi
(I)\ $is a weakly quasi primary ideal of $R/\phi(I).\ $

(ii)\ $I\ $is a $\phi$-2-absorbing quasi primary ideal of $R\ $if and only
$I/\phi(I)\ $is a weakly 2-absorbing quasi primary ideal of $R/\phi(I).$
\end{proposition}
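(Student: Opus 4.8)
The plan is to exploit the correspondence between the ideal structure of $R$ and that of the quotient ring $R/\phi(I)$ under the canonical surjection $\pi:R\rightarrow R/\phi(I)$, $a\mapsto\bar{a}=a+\phi(I)$. By the convention established just before Proposition \ref{pp} we may assume $\phi(I)\subseteq I$, so that $I/\phi(I)$ is a genuine proper ideal of $R/\phi(I)$. The single fact doing all the work is the standard radical identity
\begin{equation*}
\sqrt{I/\phi(I)}=\sqrt{I}/\phi(I),
\end{equation*}
which holds because $\bar{x}^{k}=\overline{x^{k}}$ lies in $I/\phi(I)$ exactly when $x^{k}\in I$. Once this is in hand, both equivalences reduce to tracking membership conditions back and forth across $\pi$, the crucial dictionary being that $\bar{x}=\bar{0}$ in $R/\phi(I)$ means precisely $x\in\phi(I)$.

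For part (i), I would first assume $I$ is a $\phi$-quasi primary ideal and take $\bar{a},\bar{b}\in R/\phi(I)$ with $\bar{a}\bar{b}\in(I/\phi(I))-\{\bar{0}\}$. Unwinding the quotient, this says exactly $ab\in I$ and $ab\notin\phi(I)$, i.e. $ab\in I-\phi(I)$. The hypothesis then yields $a\in\sqrt{I}$ or $b\in\sqrt{I}$; applying $\pi$ together with the radical identity gives $\bar{a}\in\sqrt{I/\phi(I)}$ or $\bar{b}\in\sqrt{I/\phi(I)}$, so $I/\phi(I)$ is weakly quasi primary. The converse runs in reverse: given $ab\in I-\phi(I)$, the element $\bar{a}\bar{b}$ is a nonzero element of $I/\phi(I)$, weak quasi primarity of $I/\phi(I)$ places $\bar{a}$ or $\bar{b}$ in $\sqrt{I/\phi(I)}=\sqrt{I}/\phi(I)$, and pulling back along $\pi$ puts $a$ or $b$ in $\sqrt{I}$. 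Hence $I$ is $\phi$-quasi primary.

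Part (ii) is to be proved by the identical dictionary, now with three factors: $\overline{abc}=\bar{a}\bar{b}\bar{c}$ is a nonzero element of $I/\phi(I)$ precisely when $abc\in I-\phi(I)$, and the three possible source conclusions $ab,ac,bc\in\sqrt{I}$ correspond term by term to $\bar{a}\bar{b},\bar{a}\bar{c},\bar{b}\bar{c}\in\sqrt{I/\phi(I)}$ via the same radical identity. I do not anticipate any genuine obstacle here; the only point requiring care is the verification of $\sqrt{I/\phi(I)}=\sqrt{I}/\phi(I)$ and consistent use of the translation that deleting $\phi(I)$ in the definition over $R$ becomes deleting $\{\bar{0}\}$ in the quotient. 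Everything else is routine bookkeeping on the correspondence induced by $\pi$.
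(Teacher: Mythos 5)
Your proposal is correct and follows essentially the same route as the paper's own proof: unwind the quotient correspondence under $\pi$ (nonzero cosets of $I/\phi(I)$ correspond exactly to elements of $I-\phi(I)$) and invoke the radical identity $\sqrt{I/\phi(I)}=\sqrt{I}/\phi(I)$ to translate the conclusions back and forth, with part (ii) handled by the same dictionary applied to three factors. No issues.
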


\begin{proof}
(i):\ Suppose that $I\ $is a $\phi$-quasi primary ideal of $R.$ Let
$0_{R/\phi(I)}\neq(a+\phi(I))(b+\phi(I))=ab+\phi(I)\in I/\phi(I)\ $for some
$a,b\in R.\ $Then we have $ab\in I-\phi(I).\ $Since $I\ $is a $\phi$-quasi
primary ideal of $R,\ $we conclude either $a\in\sqrt{I}\ $or $b\in\sqrt{I}%
.\ $This implies that $a+\phi(I)\in\sqrt{I}/\phi(I)=\sqrt{I/\phi(I)}$ or
$b+\phi(I)\in\sqrt{I/\phi(I)}$. Therefore, $I/\phi(I)$is a weakly quasi
primary ideal of $R/\phi(I).\ $Conversely, assume that $I/\phi(I)\ $is a
weakly quasi primary ideal of $R/\phi(I).\ $Now, choose $a,b\in R\ $such that
$ab\in I-\phi(I).\ $This yields that $0_{R/\phi(I)}\neq(a+\phi(I))(b+\phi
(I))=ab+\phi(I)\in I/\phi(I).\ $Since $I/\phi(I)\ $is a weakly quasi primary
ideal of $R/\phi(I),\ $we get either $a+\phi(I)\in\sqrt{I/\phi(I)}=\sqrt
{I}/\phi(I)$ or $b+\phi(I)\in\sqrt{I}/\phi(I)$. Then we have $a\in\sqrt{I}%
\ $or $b\in\sqrt{I}.\ $Hence, $I\ $is a $\phi$-quasi primary ideal of $R.\ $

(ii):\ Similar to (i).
\end{proof}

In the following, we characterize all quasi primary and 2-absorbing quasi
primary ideals in factor rings $R/\phi(I).\ $Since the proof is similar to
that of previous proposition (i), we omit the proof.

\begin{proposition}
\label{pq2}Let $I\ $be a proper ideal of $R.\ $Then,

(i)\ $I\ $is a quasi primary ideal of $R\ $if and only if $I/\phi(I)\ $is a
quasi primary ideal of $R/\phi(I).\ $

(ii)\ $I\ $is a 2-absorbing quasi primary ideal of $R\ $if and only
$I/\phi(I)\ $is a 2-absorbing quasi primary ideal of $R/\phi(I).$
\end{proposition}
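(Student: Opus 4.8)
The plan is to follow the template of Proposition \ref{pq}(i), but with the \emph{weakly} (nonzero-coset) condition on the quotient side replaced by the ordinary unrestricted condition, since here neither side carries a $\phi$ nor a nonzero hypothesis. Write $\bar{R}=R/\phi(I)$ and $\bar{I}=I/\phi(I)$, and denote cosets by $\bar{a}=a+\phi(I)$. By the standing convention $\phi(I)\subseteq I$, so $\bar{I}$ is a well-defined ideal of $\bar{R}$, and it is proper because $I$ is proper in $R$. The one nontrivial input is the radical identity $\sqrt{\bar{I}}=\sqrt{I}/\phi(I)$, which comes from the correspondence theorem and was already invoked in the proof of Proposition \ref{pq}; equivalently, $\bar{a}\in\sqrt{\bar{I}}$ if and only if $a\in\sqrt{I}$.

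For part (i), the first step is to observe that the two divisibility conditions coincide: since $\phi(I)\subseteq I$, we have $\bar{a}\bar{b}=ab+\phi(I)\in\bar{I}$ if and only if $ab\in I$. Assuming $I$ is quasi primary in $R$ and taking $\bar{a}\bar{b}\in\bar{I}$, I get $ab\in I$, hence $a\in\sqrt{I}$ or $b\in\sqrt{I}$, which through the radical identity yields $\bar{a}\in\sqrt{\bar{I}}$ or $\bar{b}\in\sqrt{\bar{I}}$; thus $\bar{I}$ is quasi primary. The converse runs the same chain of equivalences in reverse: given $ab\in I$, pass to cosets to get $\bar{a}\bar{b}\in\bar{I}$, apply quasi primariness of $\bar{I}$, and pull the conclusion back through $\sqrt{\bar{I}}=\sqrt{I}/\phi(I)$ to conclude $a\in\sqrt{I}$ or $b\in\sqrt{I}$.

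Part (ii) is the same argument with a third element: $\bar{a}\bar{b}\bar{c}\in\bar{I}$ if and only if $abc\in I$, and $\bar{a}\bar{b}\in\sqrt{\bar{I}}$ if and only if $ab\in\sqrt{I}$ (and likewise for the products $ac$ and $bc$). Hence the 2-absorbing quasi primary condition transfers verbatim in both directions, using the quasi primariness of $I$ in $R$ for one implication and of $\bar{I}$ in $\bar{R}$ for the other.

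There is essentially no obstacle here. Unlike Proposition \ref{pq}, where one had to match the set difference $I-\phi(I)$ against the nonzero-coset condition defining \emph{weakly}, here both definitions are unrestricted, so the equivalence is immediate once the two standard facts, namely $\phi(I)\subseteq I$ and $\sqrt{I/\phi(I)}=\sqrt{I}/\phi(I)$, are in hand. The only points worth recording explicitly are that $\bar{I}$ is proper and the radical identity, which is exactly why the author is entitled to omit the computation as routine.
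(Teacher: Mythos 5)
Your proof is correct and is exactly the argument the paper intends: the paper omits the proof of Proposition \ref{pq2}, stating it is similar to that of Proposition \ref{pq}(i), and your write-up is precisely that adaptation, dropping the $I-\phi(I)$ versus nonzero-coset bookkeeping and running the two standard facts $\phi(I)\subseteq I$ and $\sqrt{I/\phi(I)}=\sqrt{I}/\phi(I)$ in both directions. No gaps; the only cosmetic slip is saying ``quasi primariness'' instead of ``2-absorbing quasi primariness'' at the end of part (ii).
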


Now, we give a method for constructing $\phi$-2-absorbing quasi primary ideal
in commutative rings.

\begin{proposition}
\label{method}Let $P_{1},P_{2}$ be $\phi$-quasi primary ideal of a ring
$R.\ $Then the following statements hold:

(i) If $\phi(P_{1})=\phi(P_{2})=\phi(P_{1}\cap P_{2}),$ then $P_{1}\cap P_{2}$
is a $\phi$-2-absorbing quasi primary ideal of $R.$

(ii) If $\phi(P_{1})=\phi(P_{2})=\phi(P_{1}P_{2}),\ $then $P_{1}P_{2}$ is a
$\phi$-2-absorbing quasi primary ideal of $R$
\end{proposition}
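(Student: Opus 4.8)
The plan is to handle both parts uniformly, since in each case the radical of the target ideal coincides with $\sqrt{P_1}\cap\sqrt{P_2}$. First I would record the two radical identities
$$\sqrt{P_1\cap P_2}=\sqrt{P_1}\cap\sqrt{P_2}\qquad\text{and}\qquad\sqrt{P_1P_2}=\sqrt{P_1}\cap\sqrt{P_2},$$
where the second follows from the containments $(P_1\cap P_2)^2\subseteq P_1P_2\subseteq P_1\cap P_2$. Writing $I$ for either $P_1\cap P_2$ or $P_1P_2$, the respective hypothesis gives $\phi(I)=\phi(P_1)=\phi(P_2)$, while in both cases $I\subseteq P_1\cap P_2$. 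Hence any witness $abc\in I-\phi(I)$ satisfies $abc\in P_1\cap P_2$ and $abc\notin\phi(P_1)=\phi(P_2)$, so that simultaneously $abc\in P_1-\phi(P_1)$ and $abc\in P_2-\phi(P_2)$. In this way both (i) and (ii) reduce to the single claim: whenever $abc\in P_j-\phi(P_j)$ for $j=1,2$, one of $ab,ac,bc$ lies in $\sqrt{P_1}\cap\sqrt{P_2}$.

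To establish this claim I would first apply the $\phi$-quasi primary property of each $P_j$ to the factorization $abc=a\cdot(bc)$, obtaining $a\in\sqrt{P_1}$ or $bc\in\sqrt{P_1}$, and independently $a\in\sqrt{P_2}$ or $bc\in\sqrt{P_2}$. Two of the four resulting cases close immediately: if $bc$ lies in both radicals then $bc\in\sqrt{P_1}\cap\sqrt{P_2}$, and if $a$ lies in both radicals then $ab\in\sqrt{P_1}\cap\sqrt{P_2}$ because $\sqrt{P_1}\cap\sqrt{P_2}$ is an ideal.

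The main obstacle is the two mixed cases, say $a\in\sqrt{P_1}$ and $bc\in\sqrt{P_2}$ (the other being symmetric under interchanging $P_1$ and $P_2$). Here $a\in\sqrt{P_1}$ already forces $ab,ac\in\sqrt{P_1}$, so it suffices to push one of $ab,ac$ into $\sqrt{P_2}$ as well. The key second step is to apply the $\phi$-quasi primary property of $P_2$ to a \emph{different} factorization, namely $abc=b\cdot(ac)$: this yields either $ac\in\sqrt{P_2}$, whence $ac\in\sqrt{P_1}\cap\sqrt{P_2}$, or $b\in\sqrt{P_2}$, whence $ab\in\sqrt{P_2}$ and therefore $ab\in\sqrt{P_1}\cap\sqrt{P_2}$. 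In every case one of $ab,ac,bc$ lands in $\sqrt{P_1}\cap\sqrt{P_2}=\sqrt{I}$, which proves the claim and hence both statements. The only real subtlety is keeping track of which radical each product belongs to; once a single variable is known to lie in $\sqrt{P_1}$, the ideal property of the radicals carries out most of the bookkeeping.
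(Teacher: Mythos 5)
Your proof is correct, and it takes a genuinely more careful route than the paper's own argument. The paper proceeds by claiming that $abc\in P_{1}-\phi(P_{1})$ forces $a\in\sqrt{P_{1}}$ or $b\in\sqrt{P_{1}}$ or $c\in\sqrt{P_{1}}$ (and likewise for $P_{2}$), then assumes without loss of generality that $a\in\sqrt{P_{1}}$ and $b\in\sqrt{P_{2}}$ and finishes in one stroke with $ab\in\sqrt{P_{1}}\sqrt{P_{2}}\subseteq\sqrt{P_{1}}\cap\sqrt{P_{2}}=\sqrt{P_{1}\cap P_{2}}$, dismissing (ii) as ``similar.'' Strictly speaking, that trichotomy does not follow from the definition: applying the $\phi$-quasi primary property to the splitting $a\cdot(bc)$ only yields $a\in\sqrt{P_{1}}$ or $bc\in\sqrt{P_{1}}$, and membership of the product $bc$ in $\sqrt{P_{1}}$ does not decompose into $b\in\sqrt{P_{1}}$ or $c\in\sqrt{P_{1}}$, since $\sqrt{P_{1}}$ need not be prime. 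The omitted situation (none of $a,b,c$ in $\sqrt{P_{1}}$, hence all of $ab,ac,bc$ in $\sqrt{P_{1}}$ by the three splittings) turns out to be harmless, but the paper never addresses it. Your two-stage argument --- first the splitting $a\cdot(bc)$ for both ideals, then, in the mixed case $a\in\sqrt{P_{1}}$, $bc\in\sqrt{P_{2}}$, a second application of the definition to the \emph{different} splitting $b\cdot(ac)$ --- works only with two-factor products, so it never needs the unjustified trichotomy and closes every case rigorously; it is precisely the idea one would use to repair the paper's proof. You also spell out what the paper leaves implicit for (ii): the reduction via $\phi(P_{1}P_{2})=\phi(P_{1})=\phi(P_{2})$, the containment $P_{1}P_{2}\subseteq P_{1}\cap P_{2}$, and the identity $\sqrt{P_{1}P_{2}}=\sqrt{P_{1}}\cap\sqrt{P_{2}}$. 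What the paper's route buys, when its first step is granted, is brevity: single-element memberships let the one inclusion $\sqrt{P_{1}}\sqrt{P_{2}}\subseteq\sqrt{P_{1}}\cap\sqrt{P_{2}}$ dispose of all cases at once, whereas your argument trades that slickness for completeness.
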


\begin{proof}
(i):\ Let $abc\in P_{1}\cap P_{2}-\phi(P_{1}\cap P_{2})$ for some $a,b,c\in
R.$ Then we have $abc\in P_{1}-\phi(P_{1}).$ As $P_{1}$ is a $\phi$-quasi
primary ideal, we conclude either $a\in\sqrt{P_{1}}$ or $b\in\sqrt{P_{1}}$ or
$c\in\sqrt{P_{1}}$. Similarly, we get either $a\in\sqrt{P_{2}}$ or $b\in
\sqrt{P_{2}}$ or $c\in\sqrt{P_{2}}$. Without loss generality, we may assume
that $a\in\sqrt{P_{1}}$ and $b\in\sqrt{P_{2}}.$ Then $ab\in\sqrt{P_{1}}%
\sqrt{P_{2}}\subseteq\sqrt{P_{1}}\cap\sqrt{P_{2}}=\sqrt{P_{1}\cap P_{2}}.$
Hence, $P_{1}\cap P_{2}$ is a $\phi$-2-absorbing quasi primary ideal of $R.$

(ii):\ Similar to (i).
\end{proof}

\begin{definition}
Let $I\ $be a $\phi$-2-absorbing quasi primary ideal of $R$ and $a,b,c\in
R\ $such that $abc\in\phi(I).\ $If $ab\notin\sqrt{I},\ ac\notin\sqrt{I}\ $and
$bc\notin\sqrt{I},\ $then $(a,b,c)$ is called a strongly-$\phi$-triple zero of
$I.$\ In particular, if $\phi(I)=0,\ $then $(a,b,c)\ $is called a
strongly-triple zero of $I.\ $
\end{definition}

\begin{remark}
If $I\ $is a $\phi$-2-absorbing quasi primary ideal of $R$ that is not a
2-absorbing quasi primary ideal, then I has a strongly-$\phi$-triple zero
$(a,b,c)$ for some $a,b,c\in R.$
\end{remark}

\begin{proposition}
\label{pc}Suppose that $I\ $is a weakly 2-absorbing quasi primary ideal of
$R\ $which is not 2-absorbing quasi primary ideal, then $I^{3}=0.\ $
\end{proposition}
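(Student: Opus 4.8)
The plan is to exploit the Remark preceding the statement: a weakly $2$-absorbing quasi primary ideal is exactly the $\phi_0$ case ($\phi(I)=0$), so the failure of $I$ to be $2$-absorbing quasi primary produces a strongly-triple zero $(a,b,c)$, that is, elements with $abc=0$ while $ab\notin\sqrt{I}$, $ac\notin\sqrt{I}$ and $bc\notin\sqrt{I}$. Everything is then driven by testing the weakly $2$-absorbing quasi primary condition on small perturbations of $a,b,c$ by elements of $I$, and by repeatedly using the trivial but crucial containment $I\subseteq\sqrt{I}$ to absorb the unwanted summands that appear in each expansion.

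I would carry out a three-tier bootstrap, each tier feeding the next. First I would show $abI=acI=bcI=0$: if $abi\neq0$ for some $i\in I$, then $ab(c+i)=abc+abi=abi$ is a nonzero element of $I$, so the hypothesis forces $ab\in\sqrt{I}$, $a(c+i)\in\sqrt{I}$, or $b(c+i)\in\sqrt{I}$; since $ai,bi\in I\subseteq\sqrt{I}$, the last two collapse to $ac\in\sqrt{I}$ or $bc\in\sqrt{I}$, and all three outcomes contradict the strongly-triple-zero property (the cases for $acI$ and $bcI$ are symmetric). Using this first tier I would then show $aI^2=bI^2=cI^2=0$: if $ai_1i_2\neq0$, I expand $a(b+i_1)(c+i_2)$, and every cross term except $ai_1i_2$ lands in $abI$ or $acI$ (after rewriting $ai_1c=ac\,i_1$) or equals $abc$, hence vanishes, so $a(b+i_1)(c+i_2)=ai_1i_2$ is a nonzero element of $I$; applying the hypothesis and again absorbing the $I$-terms into $\sqrt{I}$ returns a contradiction.

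For the final tier I would assume $i_1i_2i_3\neq0$ for some $i_1,i_2,i_3\in I$ and expand $(a+i_1)(b+i_2)(c+i_3)$. The seven cross terms are each killed by the first two tiers, since they lie in $abI$, $acI$, $bcI$, $aI^2$, $bI^2$, $cI^2$, or equal $abc$, leaving $(a+i_1)(b+i_2)(c+i_3)=i_1i_2i_3\neq0$ in $I$. The hypothesis then puts one of $(a+i_1)(b+i_2)$, $(a+i_1)(c+i_3)$, $(b+i_2)(c+i_3)$ in $\sqrt{I}$, and absorbing the $I$-terms yields $ab$, $ac$, or $bc\in\sqrt{I}$, the final contradiction. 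Hence $I^3=0$.

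The only real subtlety, and the place to be careful, is the bookkeeping in these expansions: one must verify that after invoking the lower tiers the perturbed product reduces \emph{exactly} to the single surviving term and remains a \emph{nonzero} element of $I$ (not merely of $\sqrt{I}$), so that the weakly $2$-absorbing quasi primary hypothesis is legitimately applicable. The containment $I\subseteq\sqrt{I}$ then does the rest, converting every extra summand of the form $ai$, $bi$, $ci$, or $i_ji_k$ into something absorbable; this is precisely what allows the $\sqrt{I}$-version of the classical weakly-$2$-absorbing argument to go through unchanged.
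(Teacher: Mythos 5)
Your proposal is correct and is essentially the paper's own proof: the same three-tier perturbation argument ($abI=acI=bcI=0$, then $aI^2=bI^2=cI^2=0$, then the final expansion of $(a+i_1)(b+i_2)(c+i_3)$), with identical bookkeeping and the same use of $I\subseteq\sqrt{I}$ to absorb cross terms. The only difference is presentational: you fix a strongly-triple zero and derive a contradiction, while the paper argues contrapositively (assuming $I^3\neq 0$ and showing $I$ is 2-absorbing quasi primary), which amounts to the same computations.
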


\begin{proof}
\ Let $I\ $be a weakly 2-absorbing quasi primary ideal of $R$ such that
$I^{3}\neq0.\ $Now, we will show that $I\ $is a 2-absorbing quasi primary
ideal of $R.\ $Choose $a,b,c\in R\ $such that $abc\in I.\ $Since $I\ $is
weakly 2-absorbing quasi primary ideal, we may assume that $abc=0.\ $%
Otherwise, we would have $ab\in\sqrt{I}$ or $bc\in\sqrt{I}\ $or $ac\in\sqrt
{I}.\ $If $abI\neq0$,\ then there exists $x\in I\ $such that $abx\neq
0.\ $Since $0\neq abx=ab(c+x)\in I\ $and $I\ $is weakly 2-absorbing quasi
primary ideal, we get either $ab\in\sqrt{I}$ or $a(c+x)\in\sqrt{I}$ or
$b(c+x)\in\sqrt{I}.\ $Thus we have either $ab\in\sqrt{I}$ or $ac\in\sqrt{I}%
\ $or $bc\in\sqrt{I},\ $then we are done. So assume that $abI=0=acI=bcI.$\ On
the other hand, if $aI^{2}\neq0,\ $then there exists $x_{1},x_{2}\in I\ $such
that $ax_{1}x_{2}\neq0.\ $Then we have $0\neq a(b+x_{1})(c+x_{2})=ax_{1}%
x_{2}\in I\ $since $abI=acI=0.\ $As $I\ $is a weakly 2-absorbing quasi primary
ideal, we get either $a(b+x_{1})\in\sqrt{I}\ $or $a(c+x_{2})\in\sqrt{I}\ $or
$(b+x_{1})(c+x_{2})\in\sqrt{I}.\ $Then we have $ab\in\sqrt{I}$ or $bc\in
\sqrt{I}\ $or $ac\in\sqrt{I}.\ $So assume that $aI^{2}=0.\ $Similarly, we may
assume that $bI^{2}=cI^{2}=0.\ $As $I^{3}\neq0,\ $there exist $y,z,w\in
I\ $such that $yzw\neq0.\ $As $abI=0=acI=bcI=aI^{2}=bI^{2}=cI^{2},\ $it is
clear that $0\neq yzw=(a+y)(b+z)(c+w)\in I.\ $This implies that $(a+y)(b+z)\in
\sqrt{I}$ or $(a+y)(c+w)\in\sqrt{I}\ $or $(b+z)(c+w)\in\sqrt{I}$ and so we
have $ab\in\sqrt{I}$ or $bc\in\sqrt{I}\ $or $ac\in\sqrt{I}.\ $Hence, $I\ $is a
2-absorbing quasi primary ideal of $R.$
\end{proof}

\begin{corollary}
\label{cor2}If $I\ $is a weakly 2-absorbing quasi primary ideal of $R\ $which
is not 2-absorbing quasi primary ideal, then $\sqrt{I}=\sqrt{0}.\ $
\end{corollary}

\begin{theorem}
\label{ttc}(i)\ Let $I\ $be a $\phi$-2-absorbing quasi primary ideal of
$R.\ $Then either $I\ $is a 2-absorbing quasi primary ideal or $I^{3}%
\subseteq\phi(I).$

(ii)\ If $I\ $is a $\phi$-2-absorbing quasi primary ideal of $R\ $which is not
2-absorbing quasi primary ideal, then $\sqrt{I}=\sqrt{\phi(I)}.$
\end{theorem}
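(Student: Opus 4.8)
The plan is to prove the contrapositive of (i): assuming $I^{3}\not\subseteq\phi(I)$, I will show that $I$ is a $2$-absorbing quasi primary ideal, so that the failure of the second alternative forces the first. This is the natural upgrade of Proposition \ref{pc}, replacing the target $0$ by $\phi(I)$ throughout. Concretely, I take $a,b,c\in R$ with $abc\in I$ and must produce $ab\in\sqrt{I}$, $ac\in\sqrt{I}$, or $bc\in\sqrt{I}$. If $abc\notin\phi(I)$ the $\phi$-$2$-absorbing quasi primary hypothesis gives this immediately, so the only case to treat is $abc\in\phi(I)$. Here I argue by contradiction: I suppose $(a,b,c)$ is a strongly-$\phi$-triple zero, i.e. none of $ab,ac,bc$ lies in $\sqrt{I}$, and I derive $I^{3}\subseteq\phi(I)$, contradicting the standing assumption.

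The derivation proceeds in three stages, each using a perturbation trick together with the facts that $I\subseteq\sqrt{I}$ and that $\sqrt{I}$ is an ideal. First I show $abI,acI,bcI\subseteq\phi(I)$: if, say, $abx\notin\phi(I)$ for some $x\in I$, then $ab(c+x)=abc+abx\in I-\phi(I)$ (it escapes $\phi(I)$ precisely because $abc\in\phi(I)$ while $abx\notin\phi(I)$), and the $\phi$-hypothesis applied to $ab(c+x)$ yields $ab\in\sqrt{I}$, $a(c+x)\in\sqrt{I}$, or $b(c+x)\in\sqrt{I}$; absorbing the correction terms $ax,bx\in\sqrt{I}$ contradicts the triple-zero assumption, and the symmetric choices of which variable to perturb give all three containments. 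Second, with $abI=acI=bcI\subseteq\phi(I)$ in hand, I show $aI^{2},bI^{2},cI^{2}\subseteq\phi(I)$ by the same device applied to $a(b+x_{1})(c+x_{2})$, whose expansion has every cross term in $\phi(I)$ except $ax_{1}x_{2}$. Third, since $I^{3}\not\subseteq\phi(I)$ there exist $y,z,w\in I$ with $yzw\notin\phi(I)$; expanding $(a+y)(b+z)(c+w)$, all eight terms except $yzw$ lie in $\phi(I)$ by the two previous stages, so $(a+y)(b+z)(c+w)\in I-\phi(I)$, and the $\phi$-hypothesis together with the radical reductions once more contradicts the triple-zero assumption. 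This closes (i).

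Part (ii) is then a short corollary. If $I$ is $\phi$-$2$-absorbing quasi primary but not $2$-absorbing quasi primary, part (i) forces $I^{3}\subseteq\phi(I)$. Since we may assume $\phi(I)\subseteq I$, taking radicals and using $\sqrt{I^{3}}=\sqrt{I}$ gives $\sqrt{\phi(I)}\subseteq\sqrt{I}=\sqrt{I^{3}}\subseteq\sqrt{\phi(I)}$, whence $\sqrt{I}=\sqrt{\phi(I)}$.

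The main obstacle is purely the bookkeeping in (i): I must verify at each stage that the perturbed product genuinely lands in $I-\phi(I)$ (not merely in $I$), and that each of the three radical conclusions reduces, after subtracting correction terms lying in $I\subseteq\sqrt{I}$, to one of $ab,ac,bc\in\sqrt{I}$. Once the symmetry of $a,b,c$ is invoked to obtain all the required containments from a single representative computation, no genuine difficulty remains.
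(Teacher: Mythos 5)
Your proof is correct, but it takes a genuinely different route from the paper's. The paper proves (i) in three lines by passing to the quotient ring $R/\phi(I)$: by Proposition \ref{pq}(ii), $I/\phi(I)$ is a weakly $2$-absorbing quasi primary ideal of $R/\phi(I)$; by Proposition \ref{pq2}(ii), it is not $2$-absorbing quasi primary there; hence Proposition \ref{pc} applies verbatim to give $(I/\phi(I))^{3}=0_{R/\phi(I)}$, which pulls back to $I^{3}\subseteq\phi(I)$. You instead rerun the triple-zero perturbation argument of Proposition \ref{pc} directly in $R$, with $\phi(I)$ playing the role of $0$: your three stages (first $abI,acI,bcI\subseteq\phi(I)$, then $aI^{2},bI^{2},cI^{2}\subseteq\phi(I)$, then the expansion of $(a+y)(b+z)(c+w)$ against an element $yzw\notin\phi(I)$) are exactly the $\phi$-relativized versions of the steps in that proof, and the two verifications you flag do go through --- the perturbed products lie in $I-\phi(I)$ because $\phi(I)$, being an ideal in the relevant case, is closed under subtraction, and the cross terms are absorbed because $\sqrt{I}$ is an ideal containing $I$. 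What the paper's route buys is economy: the computation is done once, in the weakly case, and the quotient correspondence (including $\sqrt{I/\phi(I)}=\sqrt{I}/\phi(I)$) transports it to every $\phi$ for free. What your route buys is self-containedness: it needs neither Proposition \ref{pq} nor Proposition \ref{pq2}, and it makes explicit exactly where the hypothesis that $\phi(I)$ is an ideal (rather than an arbitrary subset) is used. Your part (ii) coincides with the paper's: from $I^{3}\subseteq\phi(I)\subseteq I$ one gets $\sqrt{I}=\sqrt{I^{3}}\subseteq\sqrt{\phi(I)}\subseteq\sqrt{I}$.
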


\begin{proof}
(i)\ Suppose that $I\ $is a $\phi$-2-absorbing quasi primary ideal of $R$ that
is not 2-absorbing quasi primary ideal. Then note that $I/\phi(I)\ $is not a
2-absorbing quasi primary ideal of $R/\phi(I).\ $Also by Proposition \ref{pq},
$I/\phi(I)$ is a weakly 2-absorbing quasi primary ideal of $R/\phi(I).\ $Then
by Proposition \ref{pc}, we get $(I/\phi(I))^{3}=0_{R/\phi(I)}\ $and this
yields $I^{3}\subseteq\phi(I).$

(ii):\ Suppose that $I\ $is a $\phi$-2-absorbing quasi primary ideal of
$R\ $which is not 2-absorbing quasi primary ideal. Then by (i), we have
$I^{3}\subseteq\phi(I)$ and thus $\sqrt{I}\subseteq\sqrt{\phi(I)}.\ $On the
other hand, since $\phi(I)\subseteq I,\ $we have $\sqrt{I}=\sqrt{\phi(I)}.\ $
\end{proof}

\begin{corollary}
\label{equivalent}Suppose that $I\ $is a proper ideal of $R\ $such that
$\phi(I)$ is a quasi primary ideal of $R.\ $Then the following statments are equivalent:

(i)\ $I\ $is a $\phi$-2-absorbing quasi primary ideal of $R.\ $

(ii)\ $I\ $is a 2-absorbing quasi primary ideal of $R.$
\end{corollary}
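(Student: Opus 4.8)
The plan is to prove both implications, with all the substance lying in $(i)\Rightarrow(ii)$. For $(ii)\Rightarrow(i)$ I would simply note that $\phi_{\emptyset}\leq\phi$ for every function $\phi$, since $\emptyset\subseteq\phi(I)$ for all $I\in L(R)$; applying Proposition \ref{pp}(iv) with $\psi_1=\phi_{\emptyset}$ and $\psi_2=\phi$ then shows that every $2$-absorbing quasi primary ideal (that is, every $\phi_{\emptyset}$-$2$-absorbing quasi primary ideal) is $\phi$-$2$-absorbing quasi primary. This direction does not use the hypothesis that $\phi(I)$ is quasi primary.

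For the forward direction I would argue by contradiction: suppose $I$ is $\phi$-$2$-absorbing quasi primary but is \emph{not} $2$-absorbing quasi primary. Then Theorem \ref{ttc}(ii) applies and yields $\sqrt{I}=\sqrt{\phi(I)}$. This is exactly the point at which the standing hypothesis is used: because $\phi(I)$ is a quasi primary ideal, its radical $\sqrt{\phi(I)}$ is prime, and hence $\sqrt{I}=\sqrt{\phi(I)}$ is prime as well. By the definition of a quasi primary ideal, $I$ is therefore itself quasi primary.

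To finish I would invoke Theorem \ref{t2}(iii) in the case $\phi=\phi_{\emptyset}$, namely that every quasi primary ideal is a $2$-absorbing quasi primary ideal. This makes $I$ a $2$-absorbing quasi primary ideal, contradicting the assumption, and so $(i)\Rightarrow(ii)$ holds.

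The one conceptual step, and the only place anything nontrivial happens, is realizing that the dichotomy of Theorem \ref{ttc} collapses under the quasi-primary hypothesis on $\phi(I)$: the exceptional alternative, in which $I$ fails to be $2$-absorbing quasi primary and satisfies $\sqrt{I}=\sqrt{\phi(I)}$, forces $\sqrt{I}$ to be prime, which in turn forces $I$ to be quasi primary and hence $2$-absorbing quasi primary. Thus the exceptional case is vacuous, and the remainder is just assembling previously proved facts.
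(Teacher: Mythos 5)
Your proof is correct and follows essentially the same route as the paper: argue $(i)\Rightarrow(ii)$ by contradiction, apply Theorem \ref{ttc}(ii) to get $\sqrt{I}=\sqrt{\phi(I)}$, use the hypothesis that $\phi(I)$ is quasi primary to conclude $\sqrt{I}$ is prime and hence $I$ is quasi primary, and then conclude $I$ is 2-absorbing quasi primary (the paper cites an external result here, while you use Theorem \ref{t2}(iii) with $\phi=\phi_{\emptyset}$, which is the same fact). Your handling of $(ii)\Rightarrow(i)$ via $\phi_{\emptyset}\leq\phi$ and Proposition \ref{pp}(iv) is just a more explicit form of the paper's ``directly from definition.''
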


\begin{proof}
$(i)\Rightarrow(ii):\ $Suppose that $I\ $is a $\phi$-2-absorbing quasi primary
ideal of $R.\ $Now, we will show that $I\ $is a 2-absorbing quasi primary
ideal of $R.$\ Deny. Then Theorem \ref{ttc}\ (ii), we have $\sqrt{I}%
=\sqrt{\phi(I)}.\ $Since $\phi(I)$ is a quasi primary ideal, we have $\sqrt
{I}=\sqrt{\phi(I)}$ is a prime ideal and so $I\ $is quasi primary. Then by
\cite[Proposition 2.6]{qua}, $I\ $is a 2-absorbing quasi primary, a contradiction.

$(ii)\Rightarrow(i):\ $Directly from definition.
\end{proof}

\begin{theorem}
(i)\ If $P$ is a weakly quasi primary ideal of $R$ that is not quasi primary,
then $P^{2}=0.$

(ii) If\ $P$ is a $\phi$-quasi primary ideal of $R$ that is not quasi primary,
then $P^{2}\subseteq\phi(P).$

(iii)\ If $P$ is a $\phi$-quasi primary ideal of $R$ where $\phi\leq\phi_{3},$
then $P$ is $n$-almost quasi primary for all $n\geq2,$ so $\omega$-quasi primary.
\end{theorem}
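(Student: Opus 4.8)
The three parts are the quasi primary (two–variable) analogues of Proposition \ref{pc} and Theorem \ref{ttc}, so the plan is to prove (i) by a direct element–adjustment argument and then bootstrap (ii) and (iii) from it through the factor–ring correspondence.

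For (i) I would argue by contradiction: assume $P$ is weakly quasi primary with $P^{2}\neq 0$ and show it is in fact quasi primary. Take $a,b\in R$ with $ab\in P$. If $ab\neq 0$ the weak hypothesis finishes immediately, so suppose $ab=0$. The heart of the argument is then to perturb $a$ and $b$ by suitable elements of $P$ so as to manufacture a nonzero product lying in $P$. If $aP\neq 0$, pick $x\in P$ with $ax\neq 0$; then $0\neq a(b+x)=ax\in P$, and weak quasi primarity gives $a\in\sqrt{P}$ or $b+x\in\sqrt{P}$, the latter forcing $b\in\sqrt{P}$ since $x\in P\subseteq\sqrt{P}$. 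The case $bP\neq 0$ is symmetric. Finally, if $aP=bP=0$, use $P^{2}\neq 0$ to find $y,z\in P$ with $yz\neq 0$; then the cross terms $ab,az,yb$ all vanish, so $(a+y)(b+z)=yz\neq 0$ lies in $P$, and the weak hypothesis again yields $a\in\sqrt{P}$ or $b\in\sqrt{P}$. This contradicts non–quasi–primarity, giving $P^{2}=0$. The argument mirrors Proposition \ref{pc} with three variables replaced by two, and I expect this case analysis to be the only genuinely computational step, hence the main obstacle.

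For (ii) I would pass to the quotient $R/\phi(P)$. By Proposition \ref{pq}(i), $P$ being $\phi$-quasi primary is equivalent to $P/\phi(P)$ being weakly quasi primary in $R/\phi(P)$; by Proposition \ref{pq2}(i), $P$ failing to be quasi primary makes $P/\phi(P)$ fail to be quasi primary in $R/\phi(P)$. Applying part (i) inside $R/\phi(P)$ gives $(P/\phi(P))^{2}=0_{R/\phi(P)}$, and unwinding this equality (every product $pq$ with $p,q\in P$ lands in $\phi(P)$) yields exactly $P^{2}\subseteq\phi(P)$.

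For (iii) I would split on whether $P$ is quasi primary. If it is, Proposition \ref{pp}(ii) already delivers that $P$ is $n$-almost quasi primary for every $n\geq 2$ and $\omega$-quasi primary. If it is not, part (ii) together with the hypothesis $\phi\leq\phi_{3}$ gives $P^{2}\subseteq\phi(P)\subseteq P^{3}\subseteq P^{2}$, whence $P^{2}=P^{3}=\phi(P)$, and an immediate induction gives $P^{n}=P^{2}=\phi(P)$ for all $n\geq 2$. Thus $\phi(P)=\phi_{n}(P)$ for every $n\geq 2$, and since the definition of $\phi$-quasi primarity for the fixed ideal $P$ depends only on the single set $\phi(P)$, the $\phi$-quasi primarity of $P$ is literally the same condition as $\phi_{n}$-quasi primarity. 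Hence $P$ is $n$-almost quasi primary for all $n\geq 2$, and Proposition \ref{pp}(iii) upgrades this to $\omega$-quasi primarity. The only subtlety here is to note that this pointwise equality $\phi(P)=\phi_{n}(P)$ suffices precisely because the notion is tested on $P$ alone.
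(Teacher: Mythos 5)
Your proposal is correct and takes essentially the same approach as the paper: the paper proves (i) and (ii) only by reference (``similar to Proposition \ref{pc}'' and ``similar to Theorem \ref{ttc}(i)''), and your element-perturbation argument for (i) and your quotient argument for (ii) via Propositions \ref{pq}(i) and \ref{pq2}(i) are precisely those arguments adapted to two variables. Your proof of (iii) also matches the paper's: both deduce $P^{2}\subseteq\phi(P)\subseteq P^{3}$, hence $\phi(P)=P^{n}$ for all $n\geq2$, and then invoke Proposition \ref{pp}(iii).
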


\begin{proof}
(i):\ Similar to Proposition \ref{pc}.

(ii):\ Similar to Theorem \ref{ttc} (i).

(ii): Assume that $P$ is a $\phi$-quasi primary ideal of $R$ and $\phi\leq
\phi_{3}.$ If $P$ is quasi primary, then $P$ is $\phi$-quasi primary for each
$\phi$. If $P$ is not quasi primary, by (i), $P^{2}\subseteq\phi(P).$ Also as
$\phi\leq\phi_{3},$ we get $P^{2}\subseteq\phi(P)\subseteq P^{3},$ so
$\phi(P)=P^{n}$ for each $n\geq2.$ Consequently, since $P$ is $\phi$-quasi
primary, $P$ is $n$-almost quasi primary for all $n\geq2,$ so $\omega$-quasi
primary by Proposition \ref{pp}\ (iii).
\end{proof}

Now, we give the Nakayama's Lemma for weakly (2-absorbing) quasi primary
ideals as follows.

\begin{theorem}
\label{nakayama}(Nakayama's Lemma) Let $P\ $be a weakly 2-absorbing quasi
primary (weakly quasi primary) ideal of $R\ $that is not 2-absorbing quasi
primary (quasi primary) and $M\ $be an $R$-module. Then the following
statements hold:

(i)\ $P\subseteq Jac(R)$,\ where $Jac(R)\ $is the jacobson radical of $R.$

(ii)\ If $PM=M,\ $then $M=0.$

(iii)\ If $N\ $is a submodule of $M\ $such that $PM+N=M,\ $then $N=M.\ $
\end{theorem}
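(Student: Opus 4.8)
The plan is to prove the three Nakayama-type statements by first establishing a key structural fact: if $P$ is a weakly $2$-absorbing quasi primary (or weakly quasi primary) ideal that fails to be $2$-absorbing quasi primary (resp.\ quasi primary), then by Corollary \ref{cor2} we have $\sqrt{P}=\sqrt{0}$, so every element of $P$ is nilpotent. This nilpotency is the engine that drives all three parts.

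For part (i), I would argue that $P \subseteq Jac(R)$ by showing every element of $P$ lies in every maximal ideal. Since $\sqrt{P}=\sqrt{0}$, each $p \in P$ is nilpotent, say $p^k=0$. If $\mathfrak{m}$ is any maximal ideal and $p \notin \mathfrak{m}$, then $p$ is a unit modulo $\mathfrak{m}$; but a nilpotent element cannot avoid a maximal ideal, since $p^k = 0 \in \mathfrak{m}$ forces $p \in \mathfrak{m}$ as $\mathfrak{m}$ is prime. Hence $p \in \mathfrak{m}$ for every maximal ideal $\mathfrak{m}$, giving $P \subseteq Jac(R)$. Equivalently, one can note $P \subseteq \sqrt{0} = \bigcap \{\text{minimal primes}\} \subseteq \bigcap \{\text{maximal ideals}\} = Jac(R)$, which is the cleanest route.

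For part (ii), the intended approach is to invoke the classical Nakayama Lemma once (i) places $P$ inside $Jac(R)$. Assuming $M$ is finitely generated, if $PM=M$ then since $P \subseteq Jac(R)$ the standard Nakayama Lemma yields $M=0$. For part (iii), apply (ii) to the quotient module $M/N$: from $PM+N=M$ we get $P(M/N)=M/N$, and since $P \subseteq Jac(R)$, part (ii) forces $M/N=0$, i.e.\ $N=M$. The weakly quasi primary case is handled identically, substituting the quasi primary hypotheses and the same conclusion $\sqrt{P}=\sqrt{0}$ (which follows from the part (i) analogue of Corollary \ref{cor2} for weakly quasi primary ideals).

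The main obstacle I anticipate is the finite-generation hypothesis on $M$, which is required for the classical Nakayama Lemma but is conspicuously absent from the statement as written. The honest mathematical content is just the inclusion $P \subseteq Jac(R)$; parts (ii) and (iii) are then immediate consequences of the ordinary Nakayama Lemma and carry no new difficulty beyond whatever finiteness assumption is tacitly in force. I would therefore present (i) in full detail via the nilpotency argument and treat (ii) and (iii) as direct applications, flagging that one needs $M$ finitely generated (or at least the relevant submodule/quotient to satisfy the hypotheses of the version of Nakayama being cited).
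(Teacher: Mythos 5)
Your part (i) is correct and in fact cleaner than the paper's argument: the paper computes explicitly that $1-rx$ is a unit via $1=(1-rx)(1+rx+r^{2}x^{2})$ using $P^{3}=0$, whereas you simply observe $P\subseteq\sqrt{P}=\sqrt{0}\subseteq Jac(R)$. Both are valid. The problem is in parts (ii) and (iii), where you have a genuine gap that you noticed but resolved the wrong way.

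The theorem as stated has no finite-generation hypothesis on $M$, and none is needed. The key fact you are missing is that Proposition \ref{pc} gives much more than Corollary \ref{cor2}: it says the \emph{ideal} $P$ is nilpotent, $P^{3}=0$ (and $P^{2}=0$ in the weakly quasi primary case), not merely that every \emph{element} of $P$ is nilpotent. Your route through $\sqrt{P}=\sqrt{0}$ only records that $P$ is a nil ideal, which is too weak to run Nakayama without finiteness, so you are forced to fall back on the classical lemma and to conjecture a tacit finite-generation hypothesis. The paper's proof of (ii) instead uses ideal nilpotency directly: if $PM=M$, then iterating gives $M=PM=P^{2}M=P^{3}M=0\cdot M=0$, valid for an arbitrary module. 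Part (iii) then follows by applying (ii) to $M/N$ exactly as you do. So the fix is simply to replace your appeal to Corollary \ref{cor2} plus classical Nakayama with an appeal to Proposition \ref{pc} plus the iteration $M=P^{3}M$; as written, your proof establishes a strictly weaker statement than the theorem claims.
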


\begin{proof}
$(i):\ $Suppose that $P\ $is a weakly 2-absorbing quasi primary ideal of $R$
that is not 2-absorbing quasi primary$.\ $Then by Theorem \ref{pc}, $P^{3}%
=0$.\ Let $x\in P.\ $Now, we will show that $1-rx$ is a unit of $R\ $for each
$r\in R.\ $Note that $rx\in P\ $and so $r^{3}x^{3}=0.\ $This implies that
$1=1-r^{3}x^{3}=(1-rx)(1+rx+r^{2}x^{2}).\ $Thus $x\in Jac(R)\ $and so
$P\subseteq Jac(R).\ $

$(ii):\ $Suppose that $PM=M.\ $Then by Theorem \ref{pc}, $P^{3}=0$ and so
$M=PM=P^{3}M=0.\ $

$(iii):\ $Follows from (ii).\ 
\end{proof}

\begin{theorem}
\label{quo} Let $S$ be a multiplicatively closed subset of $R$ and $\phi
_{q}:L(S^{-1}R)\rightarrow L(S^{-1}R)\cup\{\emptyset\},\ $defined by $\phi
_{q}(S^{-1}I)=S^{-1}(\phi(I))$ for each ideal $I\ $of $R,\ $be a function.
Then the following statements hold:

(i) If $P$ is a $\phi$-2-absorbing quasi primary ideal of $R$ with $S\cap
P=\emptyset,$ then $S^{-1}P$ is a $\phi_{q}$-2-absorbing quasi primary ideal
of $S^{-1}R.$

(ii)\ Let $P$ be an ideal of $R$ such that $Z_{\phi(P)}(R)\cap S=\emptyset$
and $Z_{P}(R)\cap S=\emptyset.$ If $S^{-1}P$ is a $\phi_{q}$-2-absorbing quasi
primary ideal of $S^{-1}R,$ then $P$ is a $\phi$-2-absorbing quasi primary
ideal of $R.$
\end{theorem}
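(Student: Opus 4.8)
The plan is to prove both directions by the standard localization bookkeeping, pushing products and radicals through the fraction map $R \to S^{-1}R$ in each direction. For part (i), I would start with elements $\frac{a}{s_1}, \frac{b}{s_2}, \frac{c}{s_3} \in S^{-1}R$ satisfying $\frac{abc}{s_1 s_2 s_3} \in S^{-1}P - \phi_q(S^{-1}P) = S^{-1}P - S^{-1}(\phi(P))$. The goal is to descend this to a membership statement in $R$ that triggers the $\phi$-2-absorbing quasi primary hypothesis on $P$. From $\frac{abc}{s_1s_2s_3} \in S^{-1}P$ I obtain $uabc \in P$ for some $u \in S$, and the crucial bookkeeping step is to check that $uabc \notin \phi(P)$: if $uabc \in \phi(P)$, then $\frac{abc}{s_1s_2s_3} = \frac{uabc}{u s_1 s_2 s_3} \in S^{-1}(\phi(P))$, contradicting our assumption. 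Applying the hypothesis to the triple $(ua, b, c)$ (or a symmetric distribution of the factor $u$) yields one of $uab, uac, bc$ in $\sqrt{P}$, and since $u \in S$ is a unit in $S^{-1}R$, this gives the desired conclusion $\frac{ab}{s_1s_2} \in \sqrt{S^{-1}P}$ or a symmetric alternative, using $S^{-1}(\sqrt{P}) = \sqrt{S^{-1}P}$.

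For part (ii), I would run the argument in reverse. Given $abc \in P - \phi(P)$ with $a,b,c \in R$, I map to $S^{-1}R$ and obtain $\frac{abc}{1} \in S^{-1}P$. The delicate point is to verify $\frac{abc}{1} \notin S^{-1}(\phi(P))$, and this is exactly where the hypothesis $Z_{\phi(P)}(R) \cap S = \emptyset$ is needed: if $\frac{abc}{1} \in S^{-1}(\phi(P))$, then $u\,abc \in \phi(P)$ for some $u \in S$, which would force $u \in Z_{\phi(P)}(R)$ unless $abc \in \phi(P)$ already — but $abc \notin \phi(P)$ by assumption, so the contradiction with $Z_{\phi(P)}(R) \cap S = \emptyset$ pins things down. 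Then the $\phi_q$-2-absorbing quasi primary property of $S^{-1}P$ gives, say, $\frac{ab}{1} \in \sqrt{S^{-1}P} = S^{-1}(\sqrt{P})$, whence $v\,ab \in \sqrt{P}$ for some $v \in S$. The second hypothesis $Z_P(R) \cap S = \emptyset$ (applied to $\sqrt{P}$, noting $Z_{\sqrt P}(R) \subseteq Z_P(R)$ or handled directly) is what lets me strip the $v$ and conclude $ab \in \sqrt{P}$.

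The main obstacle I anticipate is the careful handling of the two zero-divisor conditions when clearing denominators from radical memberships. Moving a product \emph{into} $S^{-1}P$ and checking it avoids $S^{-1}(\phi(P))$ is the part that genuinely uses the two hypotheses; the rest is routine. In particular, I must be precise about why $v\,ab \in \sqrt{P}$ together with $Z_P(R) \cap S = \emptyset$ forces $ab \in \sqrt{P}$: since $v^k a^k b^k \in P$ for some $k$ and $v^k \in S$, if $a^k b^k \notin P$ then $v^k \in Z_P(R) \cap S = \emptyset$, a contradiction, so $a^k b^k \in P$ and hence $ab \in \sqrt{P}$. I would state this cancellation lemma once and reuse it for each of the three symmetric cases rather than repeating the computation.
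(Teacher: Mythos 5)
Your proposal is correct and follows essentially the same route as the paper's proof: in (i) clear denominators, note that $t^{\ast}abc\in\phi(P)$ would put the fraction back into $S^{-1}(\phi(P))=\phi_q(S^{-1}P)$, apply the hypothesis to the triple $(t^{\ast}a,b,c)$, and pass back through $S^{-1}\sqrt{P}=\sqrt{S^{-1}P}$; in (ii) use $Z_{\phi(P)}(R)\cap S=\emptyset$ to see $\frac{abc}{1}\notin\phi_q(S^{-1}P)$ and then strip the factor $v$ from $v\,ab\in\sqrt{P}$ exactly as the paper does, via $v^{k}a^{k}b^{k}\in P$ and $Z_{P}(R)\cap S=\emptyset$. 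Your version even makes explicit two bookkeeping points the paper leaves as ``clear,'' and wisely handles the radical cancellation directly rather than relying on the dubious inclusion $Z_{\sqrt{P}}(R)\subseteq Z_{P}(R)$.
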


\begin{proof}
(i):\ Let $\frac{a}{s}\frac{b}{t}\frac{c}{u}\in S^{-1}P-\phi_{q}(S^{-1}P)$ for
any $a,b,c\in R$ and $s,t,u\in S.$ As $\phi_{q}(S^{-1}P)=S^{-1}(\phi(P)),$ we
get $t^{\ast}abc=(t^{\ast}a)bc\in P-\phi(P)$ for some $t^{\ast}\in S.$ Since
$P$ is a $\phi$-2-absorbing quasi primary ideal of $R,$ we get $t^{\ast}%
ab\in\sqrt{P}$ or $t^{\ast}ac\in\sqrt{P}$ or $bc\in\sqrt{P}.$ This implies
that $\frac{ab}{st}=\frac{t^{\ast}ab}{t^{\ast}st}\in S^{-1}\sqrt{P}%
=\sqrt{S^{-1}P}$ or $\frac{ac}{su}=\frac{t^{\ast}ac}{t^{\ast}su}\in
\sqrt{S^{-1}P}$ or $\frac{bc}{tu}\in\sqrt{S^{-1}P}.$ Hence $S^{-1}P$ is a
$\phi_{q}$-2-absorbing quasi primary ideal of $S^{-1}R.$

(ii):\ Let $abc\in P-\phi(P)$ for some $a,b,c\in R.$ Then $\frac{a}{1}\frac
{b}{1}\frac{c}{1}\in S^{-1}P.$ Since $Z_{\phi(P)}(R)\cap S=\emptyset,$ it is
clear that $\frac{a}{1}\frac{b}{1}\frac{c}{1}\notin S^{-1}(\phi(P))=\phi
_{q}(S^{-1}P).$ As $S^{-1}P$ is a $\phi_{q}$-2-absorbing quasi primary ideal
of $S^{-1}R,$ we get either $\frac{a}{1}\frac{b}{1}\in\sqrt{S^{-1}P}%
=S^{-1}\sqrt{P}$ or $\frac{a}{1}\frac{c}{1}\in S^{-1}\sqrt{P}$ or $\frac{b}%
{1}\frac{c}{1}\in S^{-1}\sqrt{P}.$ Without loss generality, we may assume that
$\frac{b}{1}\frac{c}{1}\in S^{-1}\sqrt{P}.$ Then we get $tbc\in\sqrt{P}$ and
so $t^{n}b^{n}c^{n}\in P$ for some $n\in%
\mathbb{N}
.$ If $b^{n}c^{n}\notin P,$ then we have $t^{n}\in Z_{P}(R)\cap S,$ a
contradiction. So we have $b^{n}c^{n}\in P$ and thus $bc\in\sqrt{P}.$ Thus $P$
is a $\phi$-2-absorbing quasi primary ideal of $R.$
\end{proof}

Let $M\ $be an $R$-module. The trivial ring extension (or idealization)
$R\propto M=R\oplus M$ \ of $M\ $is a commutative ring with componentwise
addition and multiplication $(a,m)(b,m^{\prime})=(ab,am^{\prime}+bm)$ for each
$a,b\in R;\ m,m^{\prime}\in M\ $\cite{Huc}.\ Let $I\ $be an ideal of $R\ $and
$N\ $is a submodule of $M.\ $Then $I\propto N=I\oplus N\ $is an ideal of
$R\propto M$ if and only if $IM\subseteq N$ \cite{AnWi}$.\ $In that case,
$I\propto N$ is called a homogeneous ideal of $R\propto M.\ $For any ideal
$I\propto N\ $of $R\propto M,\ $the radical of $I\propto N$ is characterized
as follows:
\[
\sqrt{I\propto N}=\sqrt{I}\propto M
\]
\cite[Theorem 3.2]{AnWi}.

Now, we characterize certain weakly 2-absorbing quasi primary ideals in
trivial ring extensions.

\begin{theorem}
\label{idealization}Let $M\ $be an $R$-module and $I\ $a proper ideal of
$R.\ $Then the following statements are equivalent:

(i)\ $I\propto M\ $is a weakly 2-absorbing quasi primary ideal of $R\propto
M.$

(ii)\ $I\ $is a weakly 2-absorbing quasi primary ideal of $R\ $and for any
strongly triple zero $(a,b,c)$ of $I,\ $we have $abM=0=acM=bcM.$
\end{theorem}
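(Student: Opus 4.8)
The plan is to work directly from the definition of weakly 2-absorbing quasi primary applied inside $R\propto M$, translating each statement about products back to $R$ by means of two facts: the multiplication rule $(a,m)(b,m')=(ab,am'+bm)$ and the radical formula $\sqrt{I\propto M}=\sqrt{I}\propto M$. The latter is the crucial simplification, since it shows that for any $(x,m)\in R\propto M$ we have $(x,m)\in\sqrt{I\propto M}$ if and only if $x\in\sqrt{I}$; in other words, membership in the radical of $I\propto M$ depends only on the first coordinate. Recall also that ``weakly'' is the case $\phi=\phi_0$, so the defining hypothesis reads $0\neq abc\in I$, and a strongly triple zero $(a,b,c)$ of $I$ is by definition a triple with $abc=0$ while $ab,ac,bc\notin\sqrt{I}$.

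For (i)$\Rightarrow$(ii), I would first prove $I$ is weakly 2-absorbing quasi primary: given $a,b,c\in R$ with $0\neq abc\in I$, feed the homogeneous elements $(a,0),(b,0),(c,0)$ into the hypothesis (their product is $(abc,0)\neq 0$ and lies in $I\propto M$) and read the conclusion off the first coordinates. For the triple-zero condition, suppose $(a,b,c)$ is a strongly triple zero and, aiming at a contradiction, that $abm\neq 0$ for some $m\in M$. The key move is to choose the test elements $(a,0),(b,0),(c,m)$: their product is $(abc,abm)=(0,abm)$, a nonzero member of $I\propto M$. The weakly 2-absorbing quasi primary property of $I\propto M$ then forces one of $ab,ac,bc$ into $\sqrt{I}$ (these being the first coordinates of the three pairwise products), each contradicting that $(a,b,c)$ is a strongly triple zero. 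Hence $abM=0$, and placing the extra coordinate $m$ on the appropriate factor yields $acM=0$ and $bcM=0$ symmetrically.

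For (ii)$\Rightarrow$(i), take arbitrary $(a,m_1),(b,m_2),(c,m_3)$ whose product $(abc,\,abm_3+acm_2+bcm_1)$ is a nonzero element of $I\propto M$; by the radical formula the goal reduces to showing $ab\in\sqrt{I}$ or $ac\in\sqrt{I}$ or $bc\in\sqrt{I}$. I would split on whether $abc=0$. If $abc\neq 0$, then $0\neq abc\in I$ and the weakly 2-absorbing quasi primary property of $I$ gives the conclusion at once. If $abc=0$, nonvanishing of the product forces $abm_3+acm_2+bcm_1\neq 0$; assuming toward a contradiction that none of $ab,ac,bc$ lies in $\sqrt{I}$ makes $(a,b,c)$ a strongly triple zero of $I$, so hypothesis (ii) gives $abM=acM=bcM=0$, whence $abm_3=acm_2=bcm_1=0$ and the second coordinate collapses to $0$ — contradicting nonvanishing.

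The main obstacle, and the step to get exactly right, is this last case $abc=0$ in the backward direction: one must observe that the definition of strongly triple zero has been arranged precisely so that the three summands $abm_3,acm_2,bcm_1$ of the second coordinate all vanish, which is what rules out a ``genuinely non-homogeneous'' zero-divisor triple in $I\propto M$. Everything else is bookkeeping with the product formula; the only care needed is to track which mixed term ($am_2+bm_1$, and so on) appears in each pairwise product and to remember that $\sqrt{I\propto M}=\sqrt{I}\propto M$ is detected on first coordinates alone.
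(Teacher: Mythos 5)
Your proposal is correct and follows essentially the same route as the paper's proof: testing with the homogeneous elements $(a,0_M),(b,0_M),(c,0_M)$, deriving $abM=acM=bcM=0$ for a strongly triple zero by inserting $m$ into one factor to get a nonzero product $(0,abm)$, and in the converse splitting on whether $abc=0$, where the strongly-triple-zero hypothesis collapses the second coordinate to force a contradiction. No gaps; the bookkeeping with $\sqrt{I\propto M}=\sqrt{I}\propto M$ matches the paper exactly.
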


\begin{proof}
$(i)\Rightarrow(ii):\ $Suppose that $I\propto M\ $is a weakly 2-absorbing
quasi primary ideal of $R\propto M.\ $Now, we will show that $I\ $is a weakly
2-absorbing quasi primary ideal of $R.\ $Let $0\neq abc\in I.$\ Then note that
$(0,0_{M})\neq(a,0_{M})(b,0_{M})(c,0_{M})=(abc,0_{M})\in I\propto M.\ $As
\ $I\propto M\ $is a weakly 2-absorbing quasi primary ideal, we conclude
either $(a,0_{M})(b,0_{M})=(ab,0_{M})\in\sqrt{I\propto M}=\sqrt{I}\propto M$
or $(ac,0_{M})\in\sqrt{I}\propto M$ or $(bc,0_{M})\in\sqrt{I}\propto M.\ $This
implies that $ab\in\sqrt{I}$ or $ac\in\sqrt{I}\ $or $bc\in\sqrt{I}%
.\ $Therefore, $I\ $is a weakly 2-absorbing quasi primary ideal of $R.\ $Let
$(x,y,z)$ be a strongly triple zero of $I.\ $Then $xyz=0\ $and also
$xy,xz,yz\notin\sqrt{I}.\ $Assume that $xyM\neq0.\ $Then there exists $m\in
M\ $such that $xym\neq0.\ $Then note that $(0,0_{M})\neq(x,0_{M}%
)(y,0_{M})(z,m)=(0,xym)\in I\propto M.\ $Since $I\propto M\ $is a weakly
2-absorbing quasi primary ideal, we conclude either $(x,0_{M})(y,0_{M}%
)=(xy,0_{M})\in\sqrt{I\propto M}=\sqrt{I}\propto M$ or $(xz,xm)\in\sqrt
{I}\propto M$ or $(yz,ym)\in\sqrt{I}\propto M,\ $a contradiction. Thus
$xM=yM=zM=0.\ $

$(ii)\Rightarrow(i):\ $Suppose that $(0,0_{M})\neq(a,m)(b,m^{\prime
})(c,m^{\prime\prime})=(abc,abm^{\prime\prime}+acm^{\prime}+bcm)\in I\propto
M$ for some $a,b,c\in R;\ m,m^{\prime},m^{\prime\prime}\in M.\ $Then $abc\in
I.\ $If $abc\neq0,\ $then either $ab\in\sqrt{I}$ or $ac\in\sqrt{I}\ $or
$bc\in\sqrt{I}.\ $This implies that $(a,m)(b,m^{\prime})\in\sqrt{I\propto
M}=\sqrt{I}\propto M$ or $(a,m)(c,m^{\prime\prime})\in\sqrt{I}\propto M$ or
$(b,m^{\prime})(c,m^{\prime\prime})\in\sqrt{I}\propto M$.\ Now assume that
$abc=0.\ $If $(a,b,c)$ is a strongly triple zero of $I,\ $then by assumption
$abM=0=acM=bcM\ $and so $(0,0_{M})=(abc,abm^{\prime\prime}+acm^{\prime
}+bcm)=(a,m)(b,m^{\prime})(c,m^{\prime\prime})$ which is a contradiction. So
that $(a,b,c)$ is not strongly triple zero and this yields $ab\in\sqrt{I}$ or
$ac\in\sqrt{I}\ $or $bc\in\sqrt{I}.\ $Therefore, we have $(a,m)(b,m^{\prime
})\in\sqrt{I\propto M}$ or $(a,m)(c,m^{\prime\prime})\in\sqrt{I\propto M}$ or
$(b,m^{\prime})(c,m^{\prime\prime})\in\sqrt{I\propto M}.\ $Hence, $I\propto
M\ $is a weakly 2-absorbing quasi primary ideal of $R\propto M.$
\end{proof}

Let $R_{1},R_{2},\ldots,R_{n}$ be commutative rings and $R=R_{1}\times
R_{2}\times\cdots\times R_{n}$ be the direct product of those rings. It is
well known that every ideal of $R$ has the form $I=I_{1}\times I_{2}%
\times\cdots\times I_{n},\ $where $I_{k}\ $is an ideal of $R_{k}$ for each
$1\leq k\leq n.\ $Suppose that $\psi_{i}:L(R_{i})\rightarrow L(R_{i}%
)\cup\{\emptyset\}$ is a function for each $1\leq i\leq n\ $and put
$\phi:=\psi_{1}\times\psi_{2}\times\cdots\times\psi_{n},\ $that is,
$\phi(I)=\psi_{1}(I_{1})\times\psi_{2}(I_{2})\times\cdots\times\psi_{n}%
(I_{n}).$\ Then note that $\phi:L(R)\rightarrow L(R)\cup\{\emptyset\}$ becomes
a function.

Recall that a commutative ring $R\ $is said to be a \textit{von Neumann
regular ring }if for each $a\in R,\ $there exists $x\in R\ $such that
$a=a^{2}x$ \cite{von}$.\ $In this case, the principal ideal $(a)$ is a
generated by an idempotent element $e\in R.\ $The notion of von Neumann
regular ring has an important place in commutative algebra. So far, there have
been many generalizations of this concept. See, for example, \cite{JaTe},
\cite{AnChu} and \cite{Al}. Now, we characterize von Neumann regular rings in
terms of $\phi$-2-absorbing quasi primary ideals.

\begin{theorem}
\label{VonNue}Let $R_{1},R_{2},\ldots,R_{m}\ $be commutative rings and
$R=R_{1}\times R_{2}\times\cdots\times R_{m},\ $where $3\leq m<\infty.$
Suppose that $n\geq2.\ $Then the following statements are equivalent.

(i)\ Every ideal of $R\ $is a $\phi_{n}$-2-absorbing quasi primary ideal.

(ii)\ $R_{1},R_{2},\ldots,R_{m}\ $are von Neumann regular rings.
\end{theorem}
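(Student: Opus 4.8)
The plan is to prove the two implications separately, treating $(ii)\Rightarrow(i)$ as the soft direction and $(i)\Rightarrow(ii)$ by contraposition with an explicitly constructed ideal.

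For $(ii)\Rightarrow(i)$, I would first recall that a finite direct product of von Neumann regular rings is again von Neumann regular, so $R$ itself is von Neumann regular. The key observation is that in a commutative von Neumann regular ring every ideal is idempotent: given an ideal $J$ and $a\in J$, writing $a=a^2x$ gives $a\in J^2$, whence $J=J^2$ and therefore $J^n=J$ for all $n\ge 1$. Consequently $\phi_n(J)=J^n=J$, so $J-\phi_n(J)=\emptyset$ for every proper ideal $J$ of $R$, and the defining implication of a $\phi_n$-2-absorbing quasi primary ideal is satisfied vacuously. Hence every proper ideal of $R$ is $\phi_n$-2-absorbing quasi primary.

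For $(i)\Rightarrow(ii)$ I would argue contrapositively. Suppose some factor, say $R_p$, is not von Neumann regular; then there is $a\in R_p$ with $a\notin a^2R_p$, and such an $a$ is necessarily a non-unit, since $a$ being a unit would force $a^2R_p=R_p\ni a$. Using $m\ge 3$, fix two further indices $q,r$ distinct from $p$ and from each other, and consider the proper ideal $I=I_1\times\cdots\times I_m$ whose $p$-th component is $aR_p$, whose $q$-th and $r$-th components are $(0)$, and whose remaining components are the full rings $R_\ell$. Because $a^nR_p\subseteq a^2R_p$ for $n\ge 2$ while $a\notin a^2R_p$, the element with $p$-th coordinate $a$, $q$-th and $r$-th coordinates $0$, and all other coordinates $1$ lies in $I-I^n=I-\phi_n(I)$. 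I would then exhibit a violating triple: let $x$ have $a$ in coordinate $p$ and $1$ elsewhere, let $y$ have $0$ in coordinate $q$ and $1$ elsewhere, and let $z$ have $0$ in coordinate $r$ and $1$ elsewhere. Using that $\sqrt I$ is the product of the componentwise radicals (with $\sqrt{(0)}$ the nilradical in the $q$-th and $r$-th slots and $\sqrt{aR_p}$ a proper ideal in the $p$-th slot), a direct check shows $xyz\in I-\phi_n(I)$ while $xy,\,xz,\,yz$ all escape $\sqrt I$: the product $xy$ carries a $1$ in coordinate $r$ and so is not nilpotent there, $xz$ carries a $1$ in coordinate $q$, and $yz$ carries a $1$ in coordinate $p$, hence lies outside the proper radical $\sqrt{aR_p}$. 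This contradicts $(i)$, so each $R_p$ must be von Neumann regular.

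The crux is the construction in the forward direction. Because radicals are insensitive to powers, any attempt to detect the failure $a\notin a^2R_p$ within a single coordinate is swallowed by $\sqrt{aR_p}$, which already contains $a$; this is what forces one to work with the exact difference $I-\phi_n(I)$ rather than with the radical alone. The device is to let the power-sensitive obstruction live in coordinate $p$ (guaranteeing $xyz\notin I^n$) while using the two zero-coordinates $q,r$, whose radicals are nilradicals easily dodged by the unit $1$, to keep all three pairwise products out of $\sqrt I$. This is precisely where the hypothesis $m\ge 3$ is indispensable: one coordinate must carry the non-idempotent element and two further coordinates are needed to separate the three pairwise products.
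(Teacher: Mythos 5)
Your proof is correct and follows essentially the same strategy as the paper: the $(ii)\Rightarrow(i)$ direction is handled by noting that idempotence of ideals in von Neumann regular rings makes $J-\phi_n(J)=\emptyset$, and the $(i)\Rightarrow(ii)$ direction uses the same counterexample ideal (a non-idempotent piece in one coordinate, zeros in two others, full rings elsewhere) with the same violating triple. The only cosmetic difference is that you take the principal ideal $aR_p$ generated by a witness of non-regularity, whereas the paper takes an arbitrary ideal $I_1$ with $I_1^n\varsubsetneq I_1$; the verification is otherwise identical.
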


\begin{proof}
$(i)\Rightarrow(ii):\ $Suppose that every ideal of $R\ $is a $\phi_{n}%
$-2-absorbing quasi primary ideal. Now, we will show that \ $R_{1}%
,R_{2},\ldots,R_{m}\ $are von Neumann regular rings. Suppose not. Without loss
of generality, we may assume that $R_{1}$ is not a von Neumann regular ring.
Then there exists an ideal $I_{1}\ $of $R_{1}$ such that $I_{1}^{n}%
\varsubsetneq I_{1}.\ $Then we can find an element $a\in I_{1}-I_{1}^{n}%
.\ $Now, put $J=I_{1}\times0\times0\times R_{4}\times R_{5}\times\cdots\times
R_{m}\ $and also $x_{1}=(a,1,1,1,\ldots,1),\ x_{2}=(1,0,1,1,\ldots
,1),\ x_{3}=(1,1,0,1,\ldots,1).\ $Then note that $x_{1}x_{2}x_{3}\in
J-\phi_{n}(J).\ $As $x_{1}x_{2},x_{1}x_{3}$ and $x_{2}x_{3}\notin\sqrt
{J},\ J\ $is not a $\phi_{n}$-2-absorbing quasi primary ideal of $R$ which is
a contradiction. Thus $R_{1},R_{2},\ldots,R_{m}\ $are von Neumann regular rings.

$(ii)\Rightarrow(i):\ $Suppose that $R_{1},R_{2},\ldots,R_{m}\ $are von
Neumann regular rings.\ Then note that $I_{i}^{n}=I_{i}\ $for any ideal
$I_{i}\ $of $R_{i}.\ $Take any ideal $J\ $of $R.\ $Then $J=J_{1}\times
J_{2}\times\cdots\times J_{m}\ $for some ideal $J_{k}\ $of $R_{k},\ $where
$1\leq k\leq m.\ $Then $J^{n}=J_{1}^{n}\times J_{2}^{n}\times\cdots\times
J_{m}^{n}=\phi_{n}(J)=J_{1}\times J_{2}\times\cdots\times J_{m}=J.\ $This
implies that $J-\phi_{n}(J)=\emptyset\ $and so $J\ $is trivially $\phi_{n}%
$-2-absorbing quasi primary ideal of $R.\ $
\end{proof}

\begin{corollary}
Let $R$ be a ring and $n\geq2.\ $Then the following statements are equivalent:

(i) Every ideal of\ $R^{3}\ $is a $\phi_{n}$-2-absorbing quasi primary ideal.

(ii)\ $R\ $is a von Neumann regular ring.
\end{corollary}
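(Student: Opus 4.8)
The plan is to deduce this corollary as the special case of Theorem \ref{VonNue} in which the number of factors is $m=3$ and all three factor rings coincide with $R$, so that $R_{1}\times R_{2}\times R_{3}=R\times R\times R=R^{3}$. Since $3\leq 3<\infty$, the hypothesis $3\leq m<\infty$ of the theorem is satisfied, and the only genuine point to confirm before invoking the theorem is that the function $\phi_{n}$ on $L(R^{3})$ is exactly the product function $\psi_{1}\times\psi_{2}\times\psi_{3}$ built from the component functions $\psi_{i}=\phi_{n}$ on $L(R)$, in the sense described in the paragraph preceding Theorem \ref{VonNue}.

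First I would record that every ideal of $R^{3}$ has the form $J=J_{1}\times J_{2}\times J_{3}$ for ideals $J_{k}$ of $R$, and that powers of ideals in a finite direct product are computed factorwise, so that $J^{n}=J_{1}^{n}\times J_{2}^{n}\times J_{3}^{n}$. Consequently $\phi_{n}(J)=J^{n}=\phi_{n}(J_{1})\times\phi_{n}(J_{2})\times\phi_{n}(J_{3})$, which is precisely the statement that $\phi_{n}=\psi_{1}\times\psi_{2}\times\psi_{3}$ with each $\psi_{i}$ equal to the map $\phi_{n}$ on $L(R)$. This verifies that the ambient $\phi_{n}$-structure on $R^{3}$ is of the product type required by Theorem \ref{VonNue}, so the theorem applies verbatim.

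Next I would apply Theorem \ref{VonNue} with this data. It gives the equivalence of: every ideal of $R^{3}$ is a $\phi_{n}$-2-absorbing quasi primary ideal, and $R_{1},R_{2},R_{3}$ are von Neumann regular rings. Since $R_{1}=R_{2}=R_{3}=R$, the latter condition says exactly that $R$ is a von Neumann regular ring. Matching these two statements with (i) and (ii) of the corollary closes the argument.

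I do not expect a substantive obstacle here, as the corollary is an instantiation rather than a new result; the one technical step that must be stated explicitly is the factorwise computation of ideal powers that identifies $\phi_{n}$ on $R^{3}$ with the product function, and this is a routine property of direct products of rings. Everything else is a direct reading of Theorem \ref{VonNue} under the substitution $m=3$, $R_{1}=R_{2}=R_{3}=R$.
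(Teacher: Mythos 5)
Your proposal is correct and takes essentially the same route the paper intends: the corollary appears immediately after Theorem \ref{VonNue} with no proof of its own, precisely because it is the instantiation $m=3$, $R_{1}=R_{2}=R_{3}=R$ of that theorem. Your one explicit technical step, that ideal powers in a finite direct product are computed factorwise so that $\phi_{n}$ on $L(R^{3})$ agrees with the product function $\phi_{n}\times\phi_{n}\times\phi_{n}$, is the same observation the paper itself uses inside the proof of Theorem \ref{VonNue}, so nothing is missing.
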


\section{$\phi$\textbf{-(2-absorbing) quasi primary ideals in direct product
of rings}}

In this section, we investigate $\phi$-2-absorbing quasi primary ideal in
direct product of finitely many commutative rings.

\begin{theorem}
\label{tsi}Let $R=R_{1}\times R_{2}$, where $R_{1}$ and $R_{2}$ are rings and
$\psi_{i}:L(R_{i})\rightarrow L(R_{i})\cup\{\emptyset\}$ is a function for
each $i=1,2$. Let $\phi:=\psi_{1}\times\psi_{2}$ and $J\ $an ideal of
$R.$\ Then $J$ is a $\phi$-quasi primary ideal of $R$ if and only if $J$ has
exactly one of the following three types:

(i) $J=I_{1}\times I_{2}$ such that $\psi_{i}(I_{i})=I_{i}$ for $i=1,2$.

(ii)$\ I_{1}\times R_{2}$ for some $\psi_{1}$-quasi primary ideal $I_{1}$ of
$R_{1}$ which must be quasi primary if $\psi_{2}(R_{2})\neq R_{2}$.

(iii)$\ R_{1}\times I_{2}$ for some $\psi_{2}$-quasi primary ideal $I_{2}$ of
$R_{2}$ which must be quasi primary if $\psi_{1}(R_{1})\neq R_{1.}$
\end{theorem}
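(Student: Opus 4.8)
The plan is to use the product structure of ideals in $R=R_1\times R_2$ throughout. Every ideal is $J=I_1\times I_2$ with $I_i\trianglelefteq R_i$, and two elementary facts drive everything: the radical splits as $\sqrt{J}=\sqrt{I_1}\times\sqrt{I_2}$, with $1\in\sqrt{I_i}$ iff $I_i=R_i$; and $\phi(J)=\psi_1(I_1)\times\psi_2(I_2)$, so (recalling $\psi_i(I_i)\subseteq I_i$) we have $J-\phi(J)=\emptyset$ exactly when $\psi_1(I_1)=I_1$ and $\psi_2(I_2)=I_2$. This immediately handles type (i): if both equalities hold then $J-\phi(J)=\emptyset$ and $J$ is vacuously $\phi$-quasi primary, and conversely a $\phi$-quasi primary $J$ with $J-\phi(J)=\emptyset$ is of type (i). So I would split both directions according to whether $J=\phi(J)$, and devote the work to the case $J-\phi(J)\neq\emptyset$.

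For the forward direction with $J-\phi(J)\neq\emptyset$, I would first show $I_1=R_1$ or $I_2=R_2$. Assuming both proper, pick $(c_1,c_2)\in J-\phi(J)$; without loss $c_1\notin\psi_1(I_1)$. Then $a=(c_1,1)$ and $b=(1,0)$ satisfy $ab=(c_1,0)\in J-\phi(J)$, yet $a\notin\sqrt{J}$ (since $1\notin\sqrt{I_2}$) and $b\notin\sqrt{J}$ (since $1\notin\sqrt{I_1}$), contradicting $\phi$-quasi primacy. So say $I_2=R_2$, giving $J=I_1\times R_2$ as in (ii) (the case $I_1=R_1$ is symmetric and yields (iii)). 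To see $I_1$ is $\psi_1$-quasi primary, take $a_1b_1\in I_1-\psi_1(I_1)$ and feed $a=(a_1,1)$, $b=(b_1,1)$ into the hypothesis: $ab=(a_1b_1,1)\in J-\phi(J)$ forces $a\in\sqrt{J}$ or $b\in\sqrt{J}$, i.e. $a_1\in\sqrt{I_1}$ or $b_1\in\sqrt{I_1}$. For the refinement, suppose $\psi_2(R_2)\neq R_2$ and pick $d\in R_2-\psi_2(R_2)$; then for any $a_1b_1\in I_1$ the elements $a=(a_1,1)$, $b=(b_1,d)$ give $ab=(a_1b_1,d)\in J-\phi(J)$ (since $d\notin\psi_2(R_2)$), again yielding $a_1\in\sqrt{I_1}$ or $b_1\in\sqrt{I_1}$. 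Thus $\sqrt{I_1}$ is prime, i.e. $I_1$ is quasi primary.

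For the converse, type (i) is vacuous as noted, and I would treat type (ii) (type (iii) being symmetric). Let $J=I_1\times R_2$ with $I_1$ as described, and take $(a_1,a_2)(b_1,b_2)=(a_1b_1,a_2b_2)\in J-\phi(J)$; since $\sqrt{J}=\sqrt{I_1}\times R_2$, the goal is $a_1\in\sqrt{I_1}$ or $b_1\in\sqrt{I_1}$. Membership in $J-\phi(J)$ means $a_1b_1\in I_1$ together with $a_1b_1\notin\psi_1(I_1)$ or $a_2b_2\notin\psi_2(R_2)$. If $a_1b_1\notin\psi_1(I_1)$, then $a_1b_1\in I_1-\psi_1(I_1)$ and $\psi_1$-quasi primacy of $I_1$ gives the conclusion. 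Otherwise $a_2b_2\notin\psi_2(R_2)$, which forces $\psi_2(R_2)\neq R_2$; the hypothesis then makes $I_1$ quasi primary, and $a_1b_1\in I_1$ delivers the conclusion. Hence $J$ is $\phi$-quasi primary.

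The delicate point is the clause ``must be quasi primary if $\psi_2(R_2)\neq R_2$,'' which is precisely the bridge between the two converse subcases and is exactly what the second forward construction (using $d\in R_2-\psi_2(R_2)$) produces; I would make sure the choice of $d$ and the two-case split in the converse mirror each other so the equivalence is tight. I would also add a short remark that the trichotomy is genuine: $J-\phi(J)=\emptyset$ (type (i)) is disjoint from the regime $I_1=R_1$ or $I_2=R_2$ with a proper containment $\psi_i(I_i)\subsetneq I_i$ (types (ii), (iii)), using that $J$ is a \emph{proper} ideal to exclude $J=R$.
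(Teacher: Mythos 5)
Your proof is correct, and its forward direction is essentially the paper's: both arguments test $\phi$-quasi primacy on product elements (your $(c_1,1)(1,0)$, $(a_1,1)(b_1,1)$, $(a_1,1)(b_1,d)$ versus the paper's $(x,0)(y,0)$, $(a,1)(1,b)$, $(x,1)(y,1)$), both rely on $\sqrt{I_1\times I_2}=\sqrt{I_1}\times\sqrt{I_2}$, and both split on whether $J=\phi(J)$; the only organizational difference is that you derive the dichotomy $I_1=R_1$ or $I_2=R_2$ first and then analyze only the surviving component, while the paper proves $\psi_i$-quasi primacy of both components before forcing the dichotomy. The genuine divergence is in the converse for type (ii): the paper splits globally on $\psi_2(R_2)$, arguing the case $\psi_2(R_2)=R_2$ by hand and disposing of $\psi_2(R_2)\neq R_2$ by citing \cite[Lemma 2.2]{qua} (quasi primary $I_1$ gives quasi primary $I_1\times R_2$), whereas you run a single element-level argument, splitting on whether $a_1b_1\notin\psi_1(I_1)$ or $a_2b_2\notin\psi_2(R_2)$. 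Your version is self-contained and makes transparent exactly where the clause ``quasi primary if $\psi_2(R_2)\neq R_2$'' is consumed; the paper's is shorter at the price of an external lemma. One caution about your closing remark: the disjointness you assert implicitly strengthens types (ii) and (iii) by the condition $\psi_i(I_i)\subsetneq I_i$. As literally stated, type (i) can overlap type (ii) (take $\psi_1(I_1)=I_1$ and $\psi_2(R_2)=R_2$, so that $I_1$ is vacuously $\psi_1$-quasi primary), so the ``exactly one'' in the theorem should be read with your refinement --- a point the paper's own proof does not address at all, and which your remark handles only after this silent strengthening.
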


\begin{proof}
$\Rightarrow:$ Suppose that $J$ is a $\phi$-quasi primary ideal of $R.$ Then
$J=I_{1}\times I_{2}$ for some ideal $I_{1}$ of $R_{1}$ and some ideal $I_{2}$
of $R_{2}$. Let $xy\in I_{1}-\psi_{1}(I_{1}).\ $Then we have
$(x,0)(y,0)=(xy,0)\in J-\phi(J).\ $As\ $J$ is a $\phi$-quasi primary ideal, we
conclude either $(x,0)\in\sqrt{J}\ $or $(y,0)\in\sqrt{J}.\ $Since $\sqrt
{J}=\sqrt{I_{1}}\times\sqrt{I_{2}},\ $we get $x\in\sqrt{I_{1}}$ or $y\in
\sqrt{I_{1}}.\ $Hence, $I_{1}$\ is a $\psi_{1}$-quasi primary ideal.
Similarly, $I_{2}\ $is a $\psi_{2}$-quasi primary ideal. We may assume that
$J\neq\phi(J).\ $Then we have either $I_{1}\neq\psi_{1}(I_{1})$ or $I_{2}%
\neq\psi_{2}(I_{2}).\ $Without loss of generality, we may assume that
$I_{1}\neq\psi_{1}(I_{1}).\ $So there exists $a\in I_{1}-\psi_{1}(I_{1}%
).\ $Take $b\in I_{2}.\ $Then we have $(a,1)(1,b)\in J-\phi(J).\ $This implies
either $(a,1)\in\sqrt{J}$ or $(1,b)\in\sqrt{J}.$\ Then we get $1\in\sqrt
{I_{1}}\ $or $1\in\sqrt{I_{2}},\ $that is, $I_{1}=R_{1}\ $or $I_{2}=R_{2}%
.\ $Now, assume that $I_{2}=R_{2}.\ $Now, we will show that $I_{1}\ $is a
quasi primary ideal provided that $\psi_{2}(R_{2})\neq R_{2}.\ $So suppose
$\psi_{2}(R_{2})\neq R_{2}.\ $Let $xy\in I_{1}\ $for some $x,y\in R_{1}%
.\ $Then we have $(x,1)(y,1)=(xy,1)\in I_{1}\times R_{2}-\phi(I_{1}\times
R_{2}).\ $As $J$ is a $\phi$-quasi primary ideal, we get either $(x,1)\in
\sqrt{J}\ $or $(y,1)\in\sqrt{J}.\ $Hence, $x\in\sqrt{I_{1}}$ or $y\in
\sqrt{I_{1}}.\ $Therefore, $I_{1}\ $is a quasi primary ideal.

$\Leftarrow:\ $Suppose that $J=I_{1}\times I_{2}$ such that $\psi_{i}%
(I_{i})=I_{i}\ $for $i=1,2$. Since $\phi(I_{1}\times I_{2})=\psi_{1}%
(I_{1})\times\psi_{2}(I_{2})=I_{1}\times I_{2},$ we get $I_{1}\times
I_{2}-\phi(I_{1}\times I_{2})=\emptyset\ $and so $J\ $is trivially $\phi
$-quasi primary ideal. Let $J=I_{1}\times R_{2}$ for some $\psi_{1}$-quasi
primary ideal $I_{1}$ of $R_{1}$\ which must be quasi primary if $\psi
_{2}(R_{2})\neq R_{2}$.\ First, assume that $\psi_{2}(R_{2})=R_{2}.\ $Then
note that $\phi(J)=\psi_{1}(I_{1})\times R_{2}.\ $Let $(x_{1},x_{2}%
)(y_{1},y_{2})=(x_{1}y_{1},x_{2}y_{2})\in J-\phi(J)\ $for some $x_{i},y_{i}\in
R_{i}.\ $Then we have $x_{1}y_{1}\in I_{1}-\psi_{1}(I_{1}).\ $This yields that
$x_{1}\in\sqrt{I_{1}}$ or $y_{1}\in\sqrt{I_{1}}\ $since $I_{1}$\ is a
$\psi_{1}$-quasi primary ideal.\ Then we get either $(x_{1},x_{2})\in
\sqrt{I_{1}\times R_{2}}=\sqrt{I_{1}}\times R_{2}$ or $(y_{1},y_{2})\in
\sqrt{I_{1}\times R_{2}}.$\ Hence, $J$ is a $\phi$-quasi primary ideal of
$R.$\ Now, assume that $\psi_{2}(R_{2})\neq R_{2}$\ and $I_{1}\ $is a quasi
primary ideal. Then $I_{1}\times R_{2}\ $is a quasi primary ideal of $R\ $by
\cite[Lemma 2.2]{qua}. Hence, $J=I_{1}\times R_{2}\ $is a $\phi$-quasi primary
ideal of $R.\ $In other case, one can see that $J$\ is a $\phi$-quasi primary
ideal of $R.$
\end{proof}

\begin{theorem}
Let $R=R_{1}\times R_{2}\times\cdots\times R_{n}$, where $R_{1},R_{2}%
,\ldots,R_{n}$ are rings and $\psi_{i}:L(R_{i})\rightarrow L(R_{i}%
)\cup\{\emptyset\}$ be a function for each $i=1,2,\ldots,n$. Let $\phi
:=\psi_{1}\times\psi_{2}\times\cdots\times\psi_{n}$ and $J\ $be an ideal of
$R.$\ Then $J$ is a $\phi$-quasi primary ideal of $R$ if and only if $J$ has
exactly one of the following two types:

(i)\ $J=I_{1}\times I_{2}\times\cdots\times I_{n}$ such that $\psi_{i}%
(I_{i})=I_{i}$ for $i=1,2,\ldots,n$.

(ii)\ $J=R_{1}\times R_{2}\times\cdots\times R_{t-1}\times I_{t}\times
R_{t+1}\times\cdots\times R_{n}\ $for some $\psi_{t}$-quasi primary ideal
$I_{t}$ of $R_{t}\ $which must be quasi primary if $\psi_{j}(R_{j})\neq
R_{j}\ $for some $j\neq t.$
\end{theorem}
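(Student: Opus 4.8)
The plan is to argue by induction on $n$, taking Theorem \ref{tsi} as the base case $n=2$. Note first that the two-ring classification is exactly the $n=2$ instance of the present statement: the two families (ii) and (iii) of Theorem \ref{tsi}, distinguished by which coordinate carries the proper ideal, are precisely the instances $t=1$ and $t=2$ of family (ii) here. For the inductive step I would write $R=R_{1}\times R'$ with $R'=R_{2}\times\cdots\times R_{n}$ and $\psi'=\psi_{2}\times\cdots\times\psi_{n}$, so that $\phi=\psi_{1}\times\psi'$, apply the two-ring Theorem \ref{tsi} to $R_{1}\times R'$, and afterwards unpack the $R'$-coordinate via the inductive hypothesis applied to $R'$. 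Throughout I would use $\sqrt{I_{1}\times\cdots\times I_{n}}=\sqrt{I_{1}}\times\cdots\times\sqrt{I_{n}}$, the elementary equivalence ``$\psi'(R')=R'$ iff $\psi_{j}(R_{j})=R_{j}$ for every $j\ge 2$'', and the standard description of quasi primary ideals of a finite product (their radical, being prime, has the form $R_{2}\times\cdots\times P_{t}\times\cdots\times R_{n}$ with $P_{t}$ prime, so the ideal is $R_{2}\times\cdots\times I_{t}\times\cdots\times R_{n}$ with $I_{t}$ quasi primary), together with \cite[Lemma 2.2]{qua}.

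For the forward implication, suppose $J$ is $\phi$-quasi primary. Theorem \ref{tsi} puts $J=I_{1}\times I'$ into one of three cases. If $\psi_{1}(I_{1})=I_{1}$ and $\psi'(I')=I'$, then componentwise $\psi_{i}(I_{i})=I_{i}$ for all $i$, giving type (i). If $J=I_{1}\times R'$ with $I_{1}$ being $\psi_{1}$-quasi primary (and quasi primary when $\psi'(R')\ne R'$), then the noted equivalence turns the proviso into ``$I_{1}$ quasi primary whenever $\psi_{j}(R_{j})\ne R_{j}$ for some $j\ne 1$'', i.e. type (ii) with $t=1$. The remaining case is $J=R_{1}\times I'$ with $I'$ a $\psi'$-quasi primary ideal of $R'$ that must moreover be quasi primary when $\psi_{1}(R_{1})\ne R_{1}$. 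Here I would split on $\psi_{1}(R_{1})$: if $\psi_{1}(R_{1})=R_{1}$, the inductive hypothesis describes $I'$ as type (i) or type (ii) for $R'$, and writing $I_{1}=R_{1}$ (so $\psi_{1}(I_{1})=I_{1}$) these become types (i) and (ii) for $R$, where one checks that adjoining the harmless index $j=1$ does not alter the quasi primary proviso. If instead $\psi_{1}(R_{1})\ne R_{1}$, then $I'$ is quasi primary, hence of the product form $R_{2}\times\cdots\times I_{t}\times\cdots\times R_{n}$ with $I_{t}$ quasi primary, which is type (ii) (its proviso holding unconditionally).

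For the converse I would verify each type is $\phi$-quasi primary. Type (i) forces $\phi(J)=J$, so $J-\phi(J)=\emptyset$ and $J$ is trivially $\phi$-quasi primary. For type (ii) with $t=1$ we have $J=I_{1}\times R'$ and, via the noted equivalence, $I_{1}$ is quasi primary exactly when $\psi'(R')\ne R'$, so Theorem \ref{tsi}(ii) gives the claim. For type (ii) with $t\ge 2$ we have $J=R_{1}\times I'$ where $I'=R_{2}\times\cdots\times I_{t}\times\cdots\times R_{n}$; the inductive hypothesis shows $I'$ is $\psi'$-quasi primary, and when $\psi_{1}(R_{1})\ne R_{1}$ the $n$-ring proviso (with $j=1$) forces $I_{t}$, hence $I'$ by \cite[Lemma 2.2]{qua}, to be quasi primary, so Theorem \ref{tsi}(iii) applies.

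The main obstacle is purely bookkeeping rather than conceptual: one must track how the proviso ``$I_{t}$ is quasi primary if $\psi_{j}(R_{j})\ne R_{j}$ for some $j\ne t$'' transforms under the decomposition, since the index set shrinks from $\{1,\dots,n\}$ to $\{2,\dots,n\}$ on passing to $R'$, and the coordinate $j=1$ must be reinstated---harmlessly when $\psi_{1}(R_{1})=R_{1}$, and as the triggering index when $\psi_{1}(R_{1})\ne R_{1}$. Getting these case splits to align exactly with the two advertised types, so that the quasi-primary requirements match on the nose, is the only delicate point.
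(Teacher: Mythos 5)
Your proposal is correct and follows essentially the same route as the paper: induction on $n$ with Theorem \ref{tsi} as the base case, peeling off one factor (you take $R_{1}\times R'$, the paper takes $R'\times R_{k}$ -- an immaterial difference) and then invoking the inductive hypothesis together with the product description of quasi primary ideals from \cite{qua}. The only distinction is that you spell out the proviso bookkeeping that the paper compresses into ``the rest follows from the induction hypothesis and \cite[Theorem 2.3]{qua}.''
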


\begin{proof}
We use induction on $n\ $to prove the claim. If $n=1,\ $the claim is clear. If
$n=2,\ $the claim follows from previous theorem. Assume that the claim is true
for all $n<k\ $and put $n=k.\ $Put $R^{\prime}=R_{1}\times R_{2}\times
\cdots\times R_{k-1},\ J^{\prime}=I_{1}\times I_{2}\times\cdots\times
I_{k-1}\ $and $\phi^{\prime}=\psi_{1}\times\psi_{2}\times\cdots\times
\psi_{k-1}.\ $Then note that $R=R^{\prime}\times R_{k},\ J=J^{\prime}\times
J_{k}\ $and $\phi=\phi^{\prime}\times\psi_{k}.\ $Then by previous theorem,
$J\ $is a $\phi$-quasi primary ideal of $R$ if and only if one of the
following conditions hold: (i) $J=J^{\prime}\times I_{k}$ such that
$\phi^{\prime}(J^{\prime})=J^{\prime}$ and $\psi_{k}(I_{k})=I_{k}\ $(ii)
$J=J^{\prime}\times R_{k}$ for some $\phi^{\prime}$-quasi primary ideal
$J^{\prime}$ of $R^{\prime}$ which must be quasi primary if $\psi_{k}%
(R_{k})\neq R_{k}$\ (iii)\ $J=R^{\prime}\times I_{k}$ for some $\psi_{k}%
$-quasi primary ideal $I_{k}$ of $R_{k}$ which must be quasi primary if
$\phi^{\prime}(R^{\prime})\neq R^{\prime}.\ $The rest follows from induction
hypothesis and \cite[Theorem 2.3]{qua}.{}
\end{proof}

\begin{theorem}
\label{car}Let$\ R_{1}$\ and $R_{2}$ be commutative rings with identity and
let $R=R_{1}\times R_{2}$. Suppose that $\psi_{i}:L(R_{i})\rightarrow
L(R_{i})\cup\{\emptyset\}$ $(i=1,2)$ are functions such that $\psi_{2}%
(R_{2})\neq R_{2}$\ and $\phi=\psi_{1}\times\psi_{2}.$ Then the following
assertions are equivalent:

(i)$\ I_{1}\times R_{2}$ is a $\phi$-2-absorbing quasi primary ideal of $R.$

(ii)$\ I_{1}\times R_{2}$ is a 2-absorbing quasi primary ideal of $R.$

(iii)$\ I_{1}$ is a 2-absorbing quasi primary ideal of $R_{1}.$
\end{theorem}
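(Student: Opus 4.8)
The plan is to establish the cycle $(ii)\Rightarrow(i)\Rightarrow(iii)\Rightarrow(ii)$. The implication $(ii)\Rightarrow(i)$ is immediate: since $\phi_{\emptyset}\leq\phi$ for every function $\phi$, any $2$-absorbing quasi primary (that is, $\phi_{\emptyset}$-$2$-absorbing quasi primary) ideal is automatically $\phi$-$2$-absorbing quasi primary by Proposition \ref{pp}(iv). For $(iii)\Rightarrow(ii)$ I would reduce to the behaviour of $2$-absorbing quasi primary ideals under direct products: using $\sqrt{I_{1}\times R_{2}}=\sqrt{I_{1}}\times R_{2}$ together with the characterization of $2$-absorbing quasi primary ideals in a product of two rings from \cite{qua}, the ideal $I_{1}\times R_{2}$ is $2$-absorbing quasi primary in $R$ exactly when $I_{1}$ is $2$-absorbing quasi primary in $R_{1}$.

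The substance of the argument lies in $(i)\Rightarrow(iii)$. The first step is to record the crucial consequence of the hypothesis $\psi_{2}(R_{2})\neq R_{2}$: in every case, whether $\psi_{2}(R_{2})$ is the empty symbol or a genuine proper ideal, we have $1_{R_{2}}\notin\psi_{2}(R_{2})$, since a proper ideal cannot contain the identity. Next, given $a,b,c\in R_{1}$ with $abc\in I_{1}$, I would lift to the triple $(a,1),(b,1),(c,1)$ in $R$, whose product $(abc,1)$ lies in $I_{1}\times R_{2}$. Because $\phi(I_{1}\times R_{2})=\psi_{1}(I_{1})\times\psi_{2}(R_{2})$ and $1_{R_{2}}\notin\psi_{2}(R_{2})$, this product avoids $\phi(I_{1}\times R_{2})$; hence $(abc,1)\in(I_{1}\times R_{2})-\phi(I_{1}\times R_{2})$.

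Applying hypothesis (i) to this triple and invoking $\sqrt{I_{1}\times R_{2}}=\sqrt{I_{1}}\times R_{2}$, one of $(ab,1),(ac,1),(bc,1)$ must land in $\sqrt{I_{1}}\times R_{2}$; reading off the first coordinate yields $ab\in\sqrt{I_{1}}$ or $ac\in\sqrt{I_{1}}$ or $bc\in\sqrt{I_{1}}$, which is precisely the statement that $I_{1}$ is $2$-absorbing quasi primary in $R_{1}$. The main obstacle, and the only point at which the hypothesis $\psi_{2}(R_{2})\neq R_{2}$ enters, is ensuring that the lifted product escapes $\phi(I_{1}\times R_{2})$: without properness of $\psi_{2}(R_{2})$ one could not force $(abc,1)\notin\phi(I_{1}\times R_{2})$, and the whole deduction would collapse, which is exactly why the theorem must assume $\psi_{2}(R_{2})\neq R_{2}$.
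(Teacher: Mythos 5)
Your proposal is correct, and it takes a genuinely more direct route than the paper. The paper proves the cycle $(i)\Rightarrow(ii)\Rightarrow(iii)\Rightarrow(i)$: it first disposes separately of the degenerate case $\psi_{1}(I_{1})=\emptyset$ or $\psi_{2}(R_{2})=\emptyset$, then proves $(i)\Rightarrow(ii)$ by contradiction --- showing $I_{1}$ is $\psi_{1}$-2-absorbing quasi primary by the method of Theorem \ref{tsi}, extracting a strongly $\psi_{1}$-triple zero $(x,y,z)$ of $I_{1}$ via Remark 1 if $I_{1}$ were not 2-absorbing quasi primary, and lifting it to $(x,1)(y,1)(z,1)\in I_{1}\times R_{2}-\phi(I_{1}\times R_{2})$ to contradict $(i)$ --- and quotes \cite[Theorem 2.23]{qua} for the remaining implications. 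You run the cycle $(ii)\Rightarrow(i)\Rightarrow(iii)\Rightarrow(ii)$ instead, and your $(i)\Rightarrow(iii)$ applies the same lifting trick directly to an arbitrary product $abc\in I_{1}$, not merely to a hypothetical triple zero: since the second coordinate of $(abc,1)$ is $1_{R_{2}}\notin\psi_{2}(R_{2})$ (the only place where $\psi_{2}(R_{2})\neq R_{2}$ is used, in both proofs), the lifted element always escapes $\phi(I_{1}\times R_{2})=\psi_{1}(I_{1})\times\psi_{2}(R_{2})$; this observation also absorbs the $\emptyset$ cases uniformly, because membership in the Cartesian product already fails in the second coordinate. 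You thereby avoid both the paper's preliminary case split and its triple-zero contradiction machinery, at no real cost --- the paper's intermediate conclusion that $I_{1}$ is $\psi_{1}$-2-absorbing quasi primary is not needed for the theorem itself. Both arguments rely on the same external input, \cite[Theorem 2.23]{qua}, to pass between $(ii)$ and $(iii)$ (you use only the direction $(iii)\Rightarrow(ii)$, the paper uses both), and your version has the expository advantage of making transparent that the hypothesis $\psi_{2}(R_{2})\neq R_{2}$ enters solely to guarantee $1_{R_{2}}\notin\psi_{2}(R_{2})$.
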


\begin{proof}
Assume that $\psi_{1}(I_{1})=\emptyset$ or $\psi_{2}(R_{2})=\emptyset.$ Then
clearly $\phi(I_{1}\times R_{2})=\emptyset$ so that $(i)\Leftrightarrow
(ii)\Leftrightarrow(iii)\ $follows from \cite[Theorem 2.23]{qua}. Hence
suppose that $\psi_{1}(I_{1})\neq\emptyset$ and $\psi_{2}(R_{2})\neq
\emptyset,$ so $\phi(I_{1}\times R_{2})\neq\emptyset.$

$(i)\Rightarrow(ii):$ Suppose that $I_{1}\times R_{2}$ is a $\phi$-2-absorbing
quasi primary ideal of $R.$ Similar argument in the proof of Theorem \ref{tsi}
shows that $I_{1}$ is a $\psi_{1}$-2-absorbing quasi primary ideal of $R_{1}.$
If $I_{1}$ is 2-absorbing quasi primary, then $I_{1}\times R_{2}$ is a
2-absorbing quasi primary ideal of $R,$ by \cite[Theorem 2.23]{qua}.\ If
$I_{1}$ is not 2-absorbing quasi primary, then $I_{1}$ has a strongly
$\psi_{1}$-triple zero $(x,y,z)$ for some $x,y,z\in R_{1}$ by Remark 1. Then
$(x,1)(y,1)(z,1)=(xyz,1)\in I_{1}\times R_{2}-\psi_{1}(I_{1})\times\psi
_{2}(R_{2})$ since $\psi_{2}(R_{2})\neq R_{2}.$ This implies that $xy\in
\sqrt{I_{1}}\ $or $yz\in\sqrt{I_{1}}$ or $xz\in\sqrt{I_{1}},$ a contradiction.
Thus $I_{1}$ is 2-absorbing quasi primary. Consequently, $I_{1}\times R_{2}$
is a 2-absorbing quasi primary ideal of $R.$

$(ii)\Rightarrow(iii)$ and $(iii)\Rightarrow(i):$ Follows from \cite[Theorem
2.23]{qua}.
\end{proof}

\begin{theorem}
\label{car2}Let $R_{1}$ and $R_{2}$ be commutative rings with identity and let
$R=R_{1}\times R_{2}$. Suppose that $\psi_{i}:S(R_{i})\rightarrow S(R_{i}%
)\cup\{\emptyset\}$ $(i=1,2)$ are functions and $\phi=\psi_{1}\times\psi_{2}.$
The following statements are equivalent:

(i)$\ I_{1}\times R_{2}$ is a $\phi$-2-absorbing quasi primary ideal of $R$
that is not a 2-absorbing quasi primary ideal of $R.$

(ii)$\ \phi(I_{1}\times R_{2})\neq\emptyset,\psi_{2}(R_{2})=R_{2}$ and $I_{1}$
is a $\psi_{1}$-2-absorbing quasi primary ideal of $R_{1}$ that is not a
2-absorbing quasi primary ideal of $R_{1}.$
\end{theorem}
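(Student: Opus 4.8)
The plan is to prove Theorem~\ref{car2} by relating the $\phi$-2-absorbing quasi primary structure of $I_1\times R_2$ to the component $I_1$, using Theorem~\ref{ttc}(i) as the central tool. First I would handle the implication $(i)\Rightarrow(ii)$. Assuming $I_1\times R_2$ is $\phi$-2-absorbing quasi primary but not 2-absorbing quasi primary, Theorem~\ref{ttc}(i) forces $(I_1\times R_2)^3\subseteq\phi(I_1\times R_2)=\psi_1(I_1)\times\psi_2(R_2)$. Computing the left side coordinatewise gives $I_1^3\times R_2\subseteq\psi_1(I_1)\times\psi_2(R_2)$, and reading off the second coordinate yields $R_2\subseteq\psi_2(R_2)$, hence $\psi_2(R_2)=R_2$; the first coordinate gives $I_1^3\subseteq\psi_1(I_1)$. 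This simultaneously shows $\psi_2(R_2)=R_2$ and, since a $\phi$-2-absorbing quasi primary ideal has nonempty $\phi$-value when restricted appropriately, that $\phi(I_1\times R_2)\neq\emptyset$ (otherwise $I_1\times R_2$ would simply be a 2-absorbing quasi primary ideal, contradicting the hypothesis).

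Continuing $(i)\Rightarrow(ii)$, I would establish that $I_1$ is $\psi_1$-2-absorbing quasi primary by the same pullback argument used in Theorem~\ref{tsi} and Theorem~\ref{car}: given $abc\in I_1-\psi_1(I_1)$, lift to $(a,0)(b,0)(c,0)=(abc,0)\in I_1\times R_2-\phi(I_1\times R_2)$ (using that the second coordinate $0\notin\psi_2(R_2)$ need not cause trouble here—one checks the element lies outside $\phi(I_1\times R_2)$ because its first coordinate avoids $\psi_1(I_1)$), and apply the hypothesis together with $\sqrt{I_1\times R_2}=\sqrt{I_1}\times R_2$ to recover $ab\in\sqrt{I_1}$ or $ac\in\sqrt{I_1}$ or $bc\in\sqrt{I_1}$. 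Finally, I must verify $I_1$ is \emph{not} 2-absorbing quasi primary: if it were, then by \cite[Theorem 2.23]{qua} $I_1\times R_2$ would be 2-absorbing quasi primary, contradicting $(i)$.

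For $(ii)\Rightarrow(i)$ I would run the construction in reverse. Assume $\psi_2(R_2)=R_2$, $\phi(I_1\times R_2)\neq\emptyset$, and $I_1$ is $\psi_1$-2-absorbing quasi primary but not 2-absorbing quasi primary. To show $I_1\times R_2$ is $\phi$-2-absorbing quasi primary, take $(x_1,x_2)(y_1,y_2)(z_1,z_2)\in I_1\times R_2-\phi(I_1\times R_2)$; since $\phi(I_1\times R_2)=\psi_1(I_1)\times R_2$, the product lying outside forces its first coordinate $x_1y_1z_1\in I_1-\psi_1(I_1)$, and the $\psi_1$-2-absorbing quasi primary property of $I_1$ delivers one of $x_1y_1,x_1z_1,y_1z_1\in\sqrt{I_1}$, which lifts to the required membership in $\sqrt{I_1\times R_2}=\sqrt{I_1}\times R_2$. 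To show $I_1\times R_2$ is \emph{not} 2-absorbing quasi primary, I would invoke the contrapositive of \cite[Theorem 2.23]{qua}: since $I_1$ is not 2-absorbing quasi primary, $I_1\times R_2$ cannot be either.

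The main obstacle I anticipate is bookkeeping around the condition $\psi_2(R_2)=R_2$ and the nonemptiness of $\phi(I_1\times R_2)$: these two conditions are what make $\phi(I_1\times R_2)=\psi_1(I_1)\times R_2$ (rather than $\psi_1(I_1)\times\psi_2(R_2)$ with a proper second factor), and getting the membership/non-membership of test elements in $\phi(I_1\times R_2)$ exactly right depends on this identification. In particular, the delicate point is confirming that when $\psi_2(R_2)=R_2$, an element of $I_1\times R_2$ fails to lie in $\phi(I_1\times R_2)$ if and only if its first coordinate avoids $\psi_1(I_1)$—this is precisely where the second-coordinate information degenerates and lets the problem reduce cleanly to the first factor $R_1$. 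Once that identification is nailed down, both directions are essentially the $\phi$-version of the coordinate-projection arguments already deployed in Theorems~\ref{tsi} and~\ref{car}.
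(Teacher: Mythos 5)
Your proof is correct, but it takes a genuinely different route from the paper's in both directions. For $(i)\Rightarrow(ii)$, the paper simply invokes Theorem \ref{car}: if $\psi_{2}(R_{2})\neq R_{2}$ (or if $\phi(I_{1}\times R_{2})=\emptyset$), that theorem makes ``$\phi$-2-absorbing quasi primary'' and ``2-absorbing quasi primary'' equivalent for $I_{1}\times R_{2}$, so the hypothesis of (i) forces $\psi_{2}(R_{2})=R_{2}$ and $\phi(I_{1}\times R_{2})\neq\emptyset$. You instead derive this from Theorem \ref{ttc}(i), computing $(I_{1}\times R_{2})^{3}=I_{1}^{3}\times R_{2}\subseteq\psi_{1}(I_{1})\times\psi_{2}(R_{2})$ and reading off the second coordinate; this is sound (the containment itself forces $\phi(I_{1}\times R_{2})\neq\emptyset$, hence both factors are nonempty), it bypasses the strongly-triple-zero argument buried in Theorem \ref{car}, and it makes transparent why $\psi_{2}(R_{2})$ must equal $R_{2}$. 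For $(ii)\Rightarrow(i)$ the divergence is sharper: the paper passes to quotients, using $R/\phi(I_{1}\times R_{2})\cong R_{1}/\psi_{1}(I_{1})$ together with Propositions \ref{pq}(ii) and \ref{pq2}(ii) to transport ``weakly 2-absorbing quasi primary but not 2-absorbing quasi primary'' between $I_{1}/\psi_{1}(I_{1})$ and $I_{1}\times R_{2}/\phi(I_{1}\times R_{2})$, whereas you verify the definition directly, using that under $\psi_{2}(R_{2})=R_{2}$ and $\psi_{1}(I_{1})\neq\emptyset$ one has $\phi(I_{1}\times R_{2})=\psi_{1}(I_{1})\times R_{2}$, so exclusion from $\phi(I_{1}\times R_{2})$ is detected purely in the first coordinate and the $\psi_{1}$-hypothesis on $I_{1}$ finishes. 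Both arguments are valid; the paper's quotient method reuses machinery that also powers Theorem \ref{car3}, while your element-wise version is more elementary and keeps the bookkeeping about $\emptyset$ and the degenerate second coordinate explicit. Your appeals to \cite[Theorem 2.23]{qua} for the ``not 2-absorbing quasi primary'' transfers between $I_{1}$ and $I_{1}\times R_{2}$ are the same external input the paper uses (there via \cite[Theorem 2.3]{qua}).
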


\begin{proof}
$(i)\Rightarrow(ii):$ Let $I_{1}\times R_{2}$ be $\phi$-2-absorbing quasi
primary that is not 2-absorbing quasi primary. By Theorem \ref{car}, since
$I_{1}\times R_{2}$ is not a 2-absorbing quasi primary ideal of $R,$ one can
see that $\phi(I_{1}\times R_{2})\neq\emptyset$ and $\psi_{2}(R_{2})=R_{2}.$
As $I_{1}\times R_{2}$ is a $\phi$-2-absorbing quasi primary ideal of $R,$ it
is clear that $I_{1}$ is a $\psi_{1}$-2-absorbing quasi primary ideal of
$R_{1}.$ Also, since $I_{1}\times R_{2}$ is not a 2-absorbing quasi primary
ideal of $R,$ $I_{1}$ is not a 2-absorbing quasi primary ideal of $R_{1}$ by
\cite[Theorem 2.3]{qua}.

$(ii)\Rightarrow(i):$ Since $\phi(I_{1}\times R_{2})\neq\emptyset$ and
$\psi_{2}(R_{2})=R_{2},$ we get $R/\phi(I_{1}\times R_{2})\cong R_{1}/\psi
_{1}(R_{1})$ and $I_{1}\times R_{2}/\phi(I_{1}\times R_{2})\cong I_{1}%
/\psi_{1}(I_{1}).\ $By Proposition \ref{pq}(ii), since\ $I_{1}\ $is a
$\psi_{1}$-2-absorbing quasi primary ideal of $R_{1},$ $I_{1}/\psi_{1}%
(I_{1})\ $is a weakly 2-absorbing quasi primary ideal of $R_{1}/\psi_{1}%
(R_{1}).$ Also, as $I_{1}$ is not a 2-absorbing quasi primary ideal of
$R_{1},$ then $I_{1}/\psi_{1}(I_{1})\ $is not a 2-absorbing quasi primary
ideal of $R_{1}/\psi_{1}(R_{1}),$ by Proposition \ref{pq2}(ii). Thus,
$I_{1}\times R_{2}/\phi(I_{1}\times R_{2})$ is a weakly 2-absorbing quasi
primary ideal of $R/\phi(I_{1}\times R_{2})$ that is not a 2-absorbing quasi
primary. Consequently, again by Proposition \ref{pq}(ii) and Proposition
\ref{pq2}(ii), we obtain that $I_{1}\times R_{2}$ is a $\phi$-2-absorbing
quasi primary ideal of $R$ that is not a 2-absorbing quasi primary ideal of
$R.$
\end{proof}

The following Theorem is a consequence of Theorem \ref{car}.

\begin{theorem}
\label{weak1}Let $R_{1}$ and $R_{2}$ be commutative rings with identity and
let $R=R_{1}\times R_{2}.$ Then the following assertions are equivalent:

(i) $I_{1}\times R_{2}$ is a weakly 2-absorbing quasi primary ideal of $R.$

(ii) $I_{1}\times R_{2}$ is a 2-absorbing quasi primary ideal of $R.$

(iii)$\ I_{1}$ is a 2-absorbing quasi primary ideal of $R_{1}.$
\end{theorem}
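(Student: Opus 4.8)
The plan is to recognize that this statement is nothing more than the specialization of Theorem \ref{car} to the function $\phi=\phi_{0}$, since by the definition of the special functions $\phi_{\alpha}$ the \emph{weakly} 2-absorbing quasi primary ideals are exactly the $\phi_{0}$-2-absorbing quasi primary ideals (those with $\phi(I)=0_{R}$). Accordingly, the whole argument should reduce to rewriting $\phi_{0}$ as a product of functions on the two factors and then checking that the hypotheses of Theorem \ref{car} are met.

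First I would set $\psi_{1},\psi_{2}:L(R_{i})\rightarrow L(R_{i})\cup\{\emptyset\}$ by $\psi_{i}(I)=0_{R_{i}}$ for every ideal $I$ of $R_{i}$, and put $\phi:=\psi_{1}\times\psi_{2}$. Then $\phi(J)=0_{R_{1}}\times 0_{R_{2}}=0_{R}$ for every ideal $J$ of $R$, so $\phi=\phi_{0}$ on $R$, and under this identification ``$I_{1}\times R_{2}$ is weakly 2-absorbing quasi primary'' is literally the assertion that $I_{1}\times R_{2}$ is a $\phi$-2-absorbing quasi primary ideal, i.e. condition (i) of Theorem \ref{car}. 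Next I would verify the single genuine hypothesis of Theorem \ref{car}, namely $\psi_{2}(R_{2})\neq R_{2}$: here $\psi_{2}(R_{2})=0_{R_{2}}$, and since $R_{2}$ is a commutative ring with \emph{nonzero} identity we have $R_{2}\neq 0$, whence $0_{R_{2}}\subsetneq R_{2}$ and $\psi_{2}(R_{2})\neq R_{2}$. With this in place, Theorem \ref{car} applies directly and yields the equivalence of its three conditions, which are precisely (i), (ii) and (iii) of the present theorem.

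The main (and really the only) point that deserves attention is exactly this verification that $\psi_{2}(R_{2})\neq R_{2}$; this is the sole place where the assumption of a nonzero identity on $R_{2}$ is needed, and it is what allows us to invoke Theorem \ref{car} rather than Theorem \ref{car2}. Beyond that there is no substantive obstacle, since the equivalence is then inherited verbatim, so I would present the proof as a short paragraph: specialize Theorem \ref{car} to $\phi=\phi_{0}$, observe that $\psi_{2}(R_{2})=0_{R_{2}}\neq R_{2}$ because $R_{2}\neq 0$, and conclude.
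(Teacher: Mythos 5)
Your proposal is correct and is exactly the paper's route: the paper dismisses this theorem with the single remark that it ``is a consequence of Theorem \ref{car},'' and your argument supplies precisely that specialization, taking $\psi_{i}(I)=0_{R_{i}}$ so that $\phi=\phi_{0}$ and noting $\psi_{2}(R_{2})=0_{R_{2}}\neq R_{2}$ because $R_{2}$ has a nonzero identity. Your write-up is in fact more careful than the paper's, since it makes the choice of $\psi_{1}\times\psi_{2}$ and the verification of the hypothesis $\psi_{2}(R_{2})\neq R_{2}$ explicit.
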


\begin{theorem}
\label{weak2}Let $R_{1}$ and $R_{2}$ be commutative rings with identity and
$R=R_{1}\times R_{2}.$ Let $I_{1}\times I_{2}$ be a proper ideal of
$R,\ $where $I_{1},I_{2}\ $are nonzero ideals of $R_{1}\ $and $R_{2}%
,\ $respectively.\ Then the following assertions are equivalent:

(i)$\ I_{1}\times I_{2}$ is a weakly 2-absorbing quasi primary ideal of $R.$

(ii)$\ I_{1}\times I_{2}$ is a 2-absorbing quasi primary ideal of $R.$

(ii)$\ I_{1}=R_{1}$ and $I_{2}$ is a 2-absorbing quasi primary ideal of
$R_{2}$ or $I_{2}=R_{2}$ and $I_{1}$ is a 2-absorbing quasi primary ideal of
$R_{1}$ or $I_{1},I_{2}$ are quasi primary of $R_{1},R_{2},$ respectively.
\end{theorem}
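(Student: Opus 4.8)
The plan is to prove Theorem \ref{weak2} by reducing it to the two preceding characterizations: Theorem \ref{weak1} (which handles the case where one factor is the full ring $R_i$) and the classification of $2$-absorbing quasi primary ideals in a product from \cite[Theorem 2.3]{qua}. The cyclic chain of implications $(i)\Rightarrow(ii)\Rightarrow(iii)\Rightarrow(i)$ is the natural structure, but since $(ii)\Leftrightarrow(iii)$ is already contained in \cite[Theorem 2.3]{qua} (weakly here coincides with the $\phi_0$ case), the only genuinely new content is $(i)\Rightarrow(ii)$: that a \emph{weakly} $2$-absorbing quasi primary ideal $I_1\times I_2$ with both factors nonzero is automatically a full $2$-absorbing quasi primary ideal. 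So I would concentrate the argument there.

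First I would invoke Proposition \ref{pc}: if $I_1\times I_2$ is weakly $2$-absorbing quasi primary but not $2$-absorbing quasi primary, then $(I_1\times I_2)^3 = I_1^3\times I_2^3 = 0$, forcing $I_1^3 = 0$ in $R_1$ and $I_2^3 = 0$ in $R_2$. The strategy is then to derive a contradiction from the hypothesis that $I_1$ and $I_2$ are both nonzero, using a carefully chosen triple of elements whose product lands in $I_1\times I_2$ nontrivially but which witnesses the failure of the absorbing condition. The key observation is that because both factors are nonzero, I can pick $0\neq u\in I_1$ and $0\neq v\in I_2$ and build elements of the form $(u,1)$, $(1,v)$, and a third coordinate to probe the structure; the product of such elements is a nonzero element of $I_1\times I_2$, so the weak hypothesis applies and yields one of $ab,ac,bc\in\sqrt{I_1\times I_2}=\sqrt{I_1}\times\sqrt{I_2}$. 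Tracking which radical memberships are forced, against the constraints $I_1^3=I_2^3=0$ (so $\sqrt{I_1}$ and $\sqrt{I_2}$ are contained in the nilradicals), should produce the contradiction.

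The hard part will be the bookkeeping in selecting the witnessing triple so that the product is genuinely nonzero in \emph{both} coordinates (ensuring the weak hypothesis is triggered) while simultaneously the three pairwise products all fail to lie in $\sqrt{I_1}\times\sqrt{I_2}$. Concretely, I expect the contradiction to come from an element like $(u,1,1)$-type choices: taking $a=(u,1)$, $b=(1,v)$, $c=(u,v)$ or similar, one checks $abc\neq 0$ lies in $I_1\times I_2$, yet each pairwise product has a unit in one coordinate and thus cannot lie in $\sqrt{I_1}\times\sqrt{I_2}$ unless $I_1=R_1$ or $I_2=R_2$ — which is precisely excluded once we are in the properly weak (non-$2$-absorbing) regime. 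Care is needed because $u,v$ nilpotent means $(u,1)$ might still have $u\in\sqrt{I_1}$, so the triple must instead exploit a unit coordinate (the $1$) to block radical membership.

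Once $(i)\Rightarrow(ii)$ is established, I would close the argument by citing \cite[Theorem 2.3]{qua} for $(ii)\Leftrightarrow(iii)$ and noting that $(ii)\Rightarrow(i)$ is immediate from Proposition \ref{pp}(v), since every $2$-absorbing quasi primary ideal is weakly $2$-absorbing quasi primary. This gives the full equivalence. The overall novelty is modest — the theorem is essentially the statement that for product ideals with nonzero factors the weak and strong notions collapse, which is a recurring phenomenon (cf. Theorem \ref{weak1}) — so I would keep the proof short and lean heavily on Proposition \ref{pc} and the cited product classification rather than reproving structural facts from scratch.
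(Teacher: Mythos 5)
Your overall skeleton matches the paper's: the paper also concentrates on the implication starting from (i), cites \cite[Theorem 2.23]{qua} for the equivalence of (ii) with the third condition, treats $(ii)\Rightarrow(i)$ as immediate, and builds its witnessing triple from a nonzero element of one of the ideals together with unit coordinates. But your core construction has a genuine gap. First, Proposition \ref{pc} is a red herring: $I_1^3=I_2^3=0$ only says each $\sqrt{I_i}$ equals the nilradical of $R_i$; it produces no elements \emph{outside} $\sqrt{I_i}$ whose products land in $I_i$, which is what blocking the pairwise products requires (in $R_i=\mathbb{Z}/8\mathbb{Z}$ every element is a unit or nilpotent, so nilpotency hands you nothing). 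Second, and decisively, blocking all three pairwise products by unit coordinates, as you propose, is impossible. If two of the three pairs, say $\{a,b\}$ and $\{a,c\}$, had units in the same coordinate, then all of $a,b,c$ would have units in that coordinate, so that coordinate of $abc$ would be a unit, contradicting $abc\in I_1\times I_2$ with both ideals proper. If instead $\{a,b\}$ has a unit in coordinate $1$ and $\{a,c\}$ has a unit in coordinate $2$, then $abc\in I_1\times I_2$ forces $c_1\in I_1$ and $b_2\in I_2$, whence $bc=(b_1c_1,b_2c_2)\in\sqrt{I_1}\times\sqrt{I_2}$ automatically. So with your ingredients the conclusion of the weak hypothesis is always satisfied and no contradiction ever appears.

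The missing idea is that two of the three pairwise products must be blocked by witnesses to the failure of quasi-primaryness, not by units. Concretely: assume (i) holds and (ii) fails; by your own citation of \cite[Theorem 2.23]{qua} the third condition fails, and Theorem \ref{weak1} (with its symmetric version) then forces $I_1\neq R_1$ and $I_2\neq R_2$, so some factor, say $I_2$, is not quasi primary. This gives $x,y\in R_2$ with $xy\in I_2$ but $x,y\notin\sqrt{I_2}$. Now $a=(1,x)$, $b=(1,y)$, $c=(u,1)$ with $0\neq u\in I_1$ works: $abc=(u,xy)\neq0$ lies in $I_1\times I_2$, while $ab=(1,xy)$ is blocked by the unit (since $I_1$ is proper) and $ac=(u,x)$, $bc=(u,y)$ are blocked by $x,y\notin\sqrt{I_2}$, contradicting (i). Note also that your requirement that $abc$ be nonzero in \emph{both} coordinates is misguided: $xy$ may well be $0$, and only $u\neq0$ is needed. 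This repaired argument is precisely the contrapositive of the paper's proof, which runs the same computation directly and more simply: for arbitrary $xy\in I_2$, apply the weak hypothesis to $0\neq(a,1)(1,x)(1,y)=(a,xy)$ with $0\neq a\in I_1$, rule out $(1,xy)\in\sqrt{I_1}\times\sqrt{I_2}$ by properness of $I_1$, and conclude $x\in\sqrt{I_2}$ or $y\in\sqrt{I_2}$, proving each factor quasi primary with no appeal to Proposition \ref{pc} at all.
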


\begin{proof}
$(i)\Rightarrow(iii):$ Suppose that $I_{1}\times I_{2}$ is a weakly
2-absorbing quasi primary ideal of $R.$ If $I_{1}=R_{1},$ by Theorem
\ref{weak1}, $I_{2}$ is a 2-absorbing quasi primary ideal of $R_{2}.$
Similarly, if $I_{2}=R_{2},$ $I_{1}$ is a 2-absorbing quasi primary ideal of
$R_{1}.$ Thus we may assume that $I_{1}\neq R_{1}$ and $I_{2}\neq R_{2}.$ Let
us show $I_{2}$ is a quasi primary ideal of $R_{2}$. Take $x,y\in R_{2}$ such
that $xy\in I_{2}.$ Choose $0\neq a\in I_{1}.$ Then $0\neq
(a,1)(1,x)(1,y)=(a,xy)\in I_{1}\times I_{2}.$ By our hypothesis,
$(a,x)\in\sqrt{I_{1}\times I_{2}}=\sqrt{I_{1}}\times\sqrt{I_{2}}$ or
$(1,xy)\in\sqrt{I_{1}}\times\sqrt{I_{2}}$ or $(a,y)\in\sqrt{I_{1}}\times
\sqrt{I_{2}}.$ If $(1,xy)\in\sqrt{I_{1}}\times\sqrt{I_{2}}$, a contradiction
(as $I_{1}\neq R_{1}$). Thus we obtain that $(a,x)\in\sqrt{I_{1}}\times
\sqrt{I_{2}}$ or $(a,y)\in\sqrt{I_{1}}\times\sqrt{I_{2}}.$ This implies that
$x\in\sqrt{I_{2}}$\ or $y\in\sqrt{I_{2}}.$ Similarly, we can show that $I_{1}$
is a quasi primary ideal of $R_{1}.$

$(ii)\Leftrightarrow(iii):$ By \cite[Theorem 2.23]{qua}.

$(ii)\Rightarrow(i):$ It is clear.
\end{proof}

\begin{theorem}
\label{weak3}Let $R_{1}$ and $R_{2}$ be commutative rings with identity and
$R=R_{1}\times R_{2}.$ Then a non-zero ideal $I_{1}\times I_{2}$ of $R$ is
weakly 2-absorbing quasi primary that is not 2-absorbing quasi primary if and
only if one of the following assertions holds:

(i)$\ I_{1}\neq R_{1}$ is a nonzero weakly quasi primary ideal of $R_{1}$ that
is not quasi primary and $I_{2}=0$ is a quasi primary ideal of $R_{2}.$

(ii)$\ I_{2}\neq R_{2}$ is a nonzero weakly quasi primary ideal of $R_{2}$
that is not quasi primary and $I_{1}=0$ is a quasi primary ideal of $R_{1}.$
\end{theorem}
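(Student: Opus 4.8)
The plan is to prove both implications by reducing to the component rings and exploiting the structural results already established. Throughout I write the ideal as $I_1\times I_2$ and use $\sqrt{I_1\times I_2}=\sqrt{I_1}\times\sqrt{I_2}$.

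For the forward direction, suppose $I_1\times I_2$ is weakly 2-absorbing quasi primary but not 2-absorbing quasi primary. First I would pin down which components vanish. By Theorem \ref{weak2}, if both $I_1$ and $I_2$ were nonzero the two notions would coincide, contradicting the hypothesis; and by Theorem \ref{weak1} (together with its symmetric form) neither component can equal the whole ring. Since $I_1\times I_2\neq 0$, exactly one component is zero and the other is a nonzero proper ideal; without loss of generality $I_2=0$ and $0\neq I_1\subsetneq R_1$, which lands us in case (i). By Corollary \ref{cor2}, $\sqrt{I_1\times 0}=\sqrt{0_R}$, whence $\sqrt{I_1}=\sqrt{0_{R_1}}$.

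Next I would verify the three required properties. To see that $I_1$ is weakly quasi primary, take $a,b\in R_1$ with $0\neq ab\in I_1$ and test the triple $(a,1),(b,1),(1,0)$, whose product $(ab,0)$ is a nonzero element of $I_1\times 0$; the pairwise product $(ab,1)$ cannot lie in $\sqrt{I_1}\times\sqrt{0_{R_2}}$ because $1\notin\sqrt{0_{R_2}}$, so the hypothesis forces $a\in\sqrt{I_1}$ or $b\in\sqrt{I_1}$. That $I_1$ is not quasi primary, and that $I_1\times 0$ fails to be 2-absorbing quasi primary, are tied together through the product characterization \cite[Theorem 2.23]{qua}: once $0$ is known to be quasi primary in $R_2$, if $I_1$ were also quasi primary then $I_1\times 0$ would be 2-absorbing quasi primary, contradicting the hypothesis. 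The crux is therefore showing $\sqrt{0_{R_2}}$ is prime, which I expect to be the main obstacle. I would handle it by contradiction: if $\sqrt{0_{R_2}}$ is not prime, choose $p,q\in R_2$ with $pq$ nilpotent but $p,q\notin\sqrt{0_{R_2}}$, and after replacing $p,q$ by suitable powers arrange $pq=0$ while keeping $p,q\notin\sqrt{0_{R_2}}$; picking $0\neq a\in I_1$ and testing the triple $(a,1),(1,p),(1,q)$ gives the nonzero element $(a,0)\in I_1\times 0$ whose three pairwise products $(a,p),(a,q),(1,0)$ all avoid $\sqrt{I_1}\times\sqrt{0_{R_2}}$ (using $p,q\notin\sqrt{0_{R_2}}$ and $1\notin\sqrt{0_{R_1}}$), contradicting the weakly 2-absorbing quasi primary property.

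For the converse, assume (i); case (ii) is symmetric. Since $I_1\neq R_1$ is not quasi primary and $R_2\neq 0$, the characterization \cite[Theorem 2.23]{qua} shows $I_1\times 0$ is not 2-absorbing quasi primary, so it remains to verify it is weakly 2-absorbing quasi primary. Here I would use that a weakly quasi primary ideal that is not quasi primary squares to zero, giving $I_1^2=0$ and hence $\sqrt{I_1}=\sqrt{0_{R_1}}$. Take $u=(a_1,a_2),v=(b_1,b_2),w=(c_1,c_2)$ with $0\neq uvw\in I_1\times 0$, so $0\neq a_1b_1c_1\in I_1$ and $a_2b_2c_2=0$. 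Applying the weakly quasi primary property of $I_1$ in the three groupings of $a_1b_1c_1$ shows that whenever a single factor among $a_1,b_1,c_1$ fails to lie in $\sqrt{I_1}$ the complementary pairwise product lies in $\sqrt{I_1}$; a short case analysis on how many of $a_1,b_1,c_1$ lie in $\sqrt{I_1}$ then guarantees that the pairs with first-coordinate product in $\sqrt{I_1}$ always include every pair the second coordinate can require. Since $\sqrt{0_{R_2}}$ is prime and $a_2b_2c_2=0\in\sqrt{0_{R_2}}$, at least one of $a_2,b_2,c_2$ lies in $\sqrt{0_{R_2}}$, producing a pair whose second-coordinate product lies in $\sqrt{0_{R_2}}$; matching this pair against the first-coordinate analysis yields a single pair whose product lies in $\sqrt{I_1}\times\sqrt{0_{R_2}}=\sqrt{I_1\times 0}$, as required. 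The delicate point is ensuring the \emph{same} pair works in both coordinates simultaneously, which the case analysis is designed to secure.
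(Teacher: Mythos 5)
Your proposal is correct and takes essentially the same route as the paper's own proof: the same reduction via Theorems \ref{weak1} and \ref{weak2} to the case of exactly one zero component, the same test triples in the forward direction (your contradiction argument for the quasi-primariness of $0_{R_2}$ with $(a,1),(1,p),(1,q)$ is just the contrapositive of the paper's direct argument with $(a,1),(1,x),(1,y)$, and the triple $(a,1),(b,1),(1,0)$ for weak quasi-primariness of $I_1$ is identical), and the same two ingredients in the converse, namely primality of $\sqrt{0_{R_2}}$ on the second coordinate and the weakly quasi primary property of $I_1$ on regroupings of $a_1b_1c_1$, which you match by a case analysis where the paper more economically takes without loss of generality $a_2\in\sqrt{I_2}$ and applies the single grouping $b_1(a_1c_1)$. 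Your detours through Corollary \ref{cor2} and the fact $I_1^{2}=0$ are unnecessary but harmless, and your case analysis (all pairwise products lie in $\sqrt{I_1}$ unless exactly one factor is in $\sqrt{I_1}$, in which case the two pairs containing that factor do, and these always meet the pairs supplied by the second coordinate) does close the ``same pair'' issue you flag.
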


\begin{proof}
Assume that $I_{1}\times I_{2}$ is a weakly 2-absorbing quasi primary ideal of
$R$ that is not 2-absorbing quasi primary. Suppose that $I_{1}\neq0$ and
$I_{2}\neq0.$ By Theorem \ref{weak2}, $I_{1}\times I_{2}$ is 2-absorbing quasi
primary, a contradiction. Thus $I_{1}=0$ or $I_{2}=0.$ Without loss of
generality, suppose that $I_{2}=0.$ Let us prove that $I_{2}=0$ is a quasi
primary ideal of $R_{2}.$ Choose $x,y\in R_{2}$ such that $xy\in I_{2}.$ Take
$0\neq a\in I_{1}.$ Then $0\neq(a,1)(1,x)(1,y)=(a,xy)\in I_{1}\times I_{2}.$
By our hypothesis, $(a,x)\in\sqrt{I_{1}\times I_{2}}=\sqrt{I_{1}}\times
\sqrt{I_{2}}$ or $(1,xy)\in\sqrt{I_{1}}\times\sqrt{I_{2}}$ or $(a,y)\in
\sqrt{I_{1}}\times\sqrt{I_{2}}.$ Here $(1,xy)\notin\sqrt{I_{1}}\times
\sqrt{I_{2}}$. Indeed, firstly observe that $I_{1}\neq R_{1}.$ If $I_{1}%
=R_{1},$ then by Theorem \ref{car}, $I_{1}\times I_{2}=R_{1}\times0$ is a
2-absorbing quasi primary, a contradiction. Thus we conclude that
$(a,x)\in\sqrt{I_{1}\times I_{2}}=\sqrt{I_{1}}\times\sqrt{I_{2}}$ or
$(a,y)\in\sqrt{I_{1}}\times\sqrt{I_{2}}.$ This implies $x\in\sqrt{I_{2}}$\ or
$y\in\sqrt{I_{2}}.$ Hence $I_{2}=0$ is quasi primary. Now, let us show that
$I_{1}$ is weaklyquasi primary ideal of $R_{1}.$ Choose $x,y\in R_{1}$ such
that $0\neq xy\in I_{1}.$ Then $0\neq(x,1)(y,1)(1,0)=(xy,0)\in I_{1}%
\times0=I_{1}\times I_{2}.$ As $I_{1}\times I_{2}$ is weakly 2-absorbing quasi
primary and $(xy,1)\notin\sqrt{I_{1}\times0},$ we have $(y,0)\in\sqrt
{I_{1}\times0}$ or $(x,0)\in\sqrt{I_{1}\times0}.$ This implies that $x\in
\sqrt{I_{1}}$\ or $y\in\sqrt{I_{1}}.$ Finally, we show that $I_{1}$ is not
quasi primary. Suppose that $I_{1}$ is quasi primary. As $I_{2}=0$ is a quasi
primary, we have that $I_{1}\times I_{2}$ is 2-absorbing quasi primary by
\cite[Theorem 2.3]{qua}. This contradicts with our assumption. Thus $I_{1}$ is
not quasi primary. Conversely, assume that (i) holds. Let us prove
$I_{1}\times I_{2}$ is weakly 2-absorbing quasi primary. Let $(0,0)\neq
(a_{1},a_{2})(b_{1},b_{2})(c_{1},c_{2})\in I=I_{1}\times I_{2}=I_{1}\times0.$
As $a_{2}b_{2}c_{2}=0,$ we get $a_{1}b_{1}c_{1}\neq0.\ $Since $a_{2}b_{2}%
c_{2}\in I_{2}$ and $I_{2}$ is a quasi primary ideal of $R_{2},\ $we get
either $a_{2}\in\sqrt{I_{2}}\ $or $b_{2}\in\sqrt{I_{2}}$ or $c_{2}\in
\sqrt{I_{2}}.\ $Without loss of generality, we may assume that $a_{2}\in
\sqrt{I_{2}}.\ $On the other hand, since $0\neq a_{1}b_{1}c_{1}=b_{1}%
(a_{1}c_{1})\in I_{1}\ $and $I_{1}\ $is a weakly quasi primary ideal, we have
either $b_{1}\in\sqrt{I_{1}}$ or $a_{1}c_{1}\in\sqrt{I_{1}}.\ $This implies
that either $(a_{1},a_{2})(b_{1},b_{2})\in\sqrt{I_{1}\times I_{2}}$ or
$(a_{1},a_{2})(c_{1},c_{2})\in\sqrt{I_{1}\times I_{2}}$.\ In other cases, one
can similarly show that $(a_{1},a_{2})(b_{1},b_{2})\in\sqrt{I_{1}\times I_{2}%
}$ or $(a_{1},a_{2})(c_{1},c_{2})\in\sqrt{I_{1}\times I_{2}}$ or $(b_{1}%
,b_{2})(c_{1},c_{2})\in\sqrt{I_{1}\times I_{2}}.\ $Hence, $I_{1}\times
I_{2}\ $is weakly 2-absorbing quasi primary ideal of $R.\ $Also, since
$I_{1}\ $is not quasi primary ideal, $I_{1}\times I_{2}$ is not a 2-absorbing
quasi primary ideal by \cite[Theorem 2.3]{qua}.
\end{proof}

\begin{theorem}
\label{car3}Let $R_{1}$ and $R_{2}$ be commutative rings with identity and let
$R=R_{1}\times R_{2}$. Suppose that $\psi_{i}:L(R_{i})\rightarrow L(R_{i}%
)\cup\{\emptyset\}$ $(i=1,2)$ are functions and $\phi=\psi_{1}\times\psi_{2}.$
Let $I=I_{1}\times I_{2}$ be a nonzero ideal of $R$ and $\phi(I)\neq
I_{1}\times I_{2}.$ Then $I_{1}\times I_{2}$ is $\phi$-2-absorbing quasi
primary that is not 2-absorbing quasi primary if and only if $\phi
(I)\neq\emptyset$ and one of the following statements holds.

(i) $\psi_{2}(R_{2})=R_{2}$ and $I_{1}$ is a $\psi_{1}$-2-absorbing quasi
primary ideal of $R_{1}$ that is not a 2-absorbing quasi primary ideal of
$R_{1}.$ $\ \ \ $

(ii)$\ \psi_{1}(R_{1})=R_{1}$ and $I_{2}$ is a $\psi_{2}$-2-absorbing quasi
primary ideal of $R_{2}$ that is not a 2-absorbing quasi primary ideal of
$R_{2}.$

(iii)$\ I_{2}=\psi_{2}(I_{2})$ is a quasi primary of $R_{2}$ and $I_{1}\neq
R_{1}$ is a $\psi_{1}$-quasi primary ideal of $R_{1}$ that is not quasi
primary such that $I_{1}\neq\psi_{1}(I_{1})$ (note that if $I_{1}=0,$ then
$I_{2}\neq0)$

(iv) $I_{1}=\psi_{1}(I_{1})$ is a quasi primary of $R_{1}$ and $I_{2}\neq
R_{2}$ is a $\psi_{2}$-quasi primary ideal of $R_{2}$ that is not quasi
primary such that $I_{2}\neq\psi_{2}(I_{2})$ (note that if $I_{2}=0,$ then
$I_{1}\neq0)$
\end{theorem}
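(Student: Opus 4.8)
The plan is to reduce everything to the factor-ring picture via Proposition \ref{pq}(ii) and Proposition \ref{pq2}(ii), mirroring the method used in Theorem \ref{car2}. Since $\phi(I)\neq I_1\times I_2$ forces $\phi(I)=\psi_1(I_1)\times\psi_2(I_2)\subsetneq I_1\times I_2$, I first observe that if $\phi(I)=\emptyset$, then $\phi$-2-absorbing quasi primary coincides with $2$-absorbing quasi primary, so no ideal can be $\phi$-2-absorbing quasi primary yet fail to be $2$-absorbing quasi primary. Hence $\phi(I)\neq\emptyset$ is forced in both directions, and throughout I may assume $\psi_1(I_1),\psi_2(I_2)\neq\emptyset$.

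\textbf{The forward direction.}

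Assume $I_1\times I_2$ is $\phi$-2-absorbing quasi primary but not $2$-absorbing quasi primary. By Theorem \ref{ttc}(i), $(I_1\times I_2)^3\subseteq\phi(I)$, which componentwise gives $I_1^3\subseteq\psi_1(I_1)$ and $I_2^3\subseteq\psi_2(I_2)$. I would split according to which of $\psi_1(I_1)=I_1$ or $\psi_2(I_2)=I_2$ holds. Since $\phi(I)\subsetneq I$, at least one component is proper, say $\psi_2(I_2)\subsetneq I_2$. Restricting the $\phi$-2-absorbing condition to triples of the form $(a,1)(b,1)(c,1)$ (as in Theorem \ref{tsi}) shows $I_1$ is $\psi_1$-2-absorbing quasi primary in $R_1$; testing triples $(1,x)(1,y)(1,z)$ similarly controls $I_2$. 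The delicate point is separating cases (i)--(ii), where one $R_k$ is absorbed wholly ($\psi_k(R_k)=R_k$) and the other component carries a genuine $\psi$-2-absorbing-but-not-2-absorbing ideal, from cases (iii)--(iv), where both components are quasi primary-type and the failure of $2$-absorbing comes from one factor being $\psi$-quasi primary but not quasi primary. I would use \cite[Theorem 2.23]{qua} to detect exactly when $I_1\times I_2$ is $2$-absorbing quasi primary and thereby force the failure into precisely one of the four listed configurations, checking the parenthetical nondegeneracy remarks (e.g. if $I_1=0$ then $I_2\neq0$) by noting that $I_1\times I_2$ is assumed nonzero.

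\textbf{The converse and the main obstacle.}

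For the converse I would verify each of (i)--(iv) separately. Cases (i) and (ii) follow almost immediately from Theorem \ref{car2}, so the real work is cases (iii) and (iv). Here I would take a triple $(a_1,a_2)(b_1,b_2)(c_1,c_2)\in I-\phi(I)$, use that $I_2=\psi_2(I_2)$ is quasi primary to land two of $a_2,b_2,c_2$ (after grouping) inside $\sqrt{I_2}$, and use that $I_1$ is $\psi_1$-quasi primary on the first coordinate to place a product in $\sqrt{I_1}$; combining these via $\sqrt{I_1\times I_2}=\sqrt{I_1}\times\sqrt{I_2}$ yields one of the three absorbing conclusions. The delicate bookkeeping, which I expect to be the main obstacle, is confirming that $I_1\times I_2$ is genuinely \emph{not} $2$-absorbing quasi primary: this rests on showing via \cite[Theorem 2.23]{qua} that $I_1$ failing to be quasi primary (while $I_1\neq\psi_1(I_1)$, so the $\phi$-hypothesis is nonvacuous) obstructs the $2$-absorbing quasi primary property of the product, while the $\phi$-version survives precisely because the offending triple lands in $\phi(I)$. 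Managing the interplay of the nonvacuity condition $I_1\neq\psi_1(I_1)$ with the degenerate subcases $I_1=0$ or $I_2=0$ is where I would be most careful.
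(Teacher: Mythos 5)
Your proposal has a genuine gap in the forward direction, precisely at the step the theorem is really about. In the main case $I_{1}\neq R_{1}$ and $I_{2}\neq R_{2}$, conclusions (iii)/(iv) assert much more than your tools deliver: you must show that one component, say $I_{2}$, satisfies $I_{2}=\psi_{2}(I_{2})$ \emph{and} is honestly quasi primary, while the other satisfies $I_{1}\neq\psi_{1}(I_{1})$, is $\psi_{1}$-quasi primary (not merely $\psi_{1}$-2-absorbing quasi primary), and is \emph{not} quasi primary. Your plan derives only that each $I_{i}$ is $\psi_{i}$-2-absorbing quasi primary and then appeals to \cite[Theorem 2.23]{qua} to ``force the failure into precisely one of the four listed configurations.'' That appeal cannot work: since $I$ is not 2-absorbing quasi primary, \cite[Theorem 2.23]{qua} only tells you that $I_{1}$ and $I_{2}$ are not both quasi primary; it produces neither the quasi primariness of one factor, nor the $\psi$-quasi primariness of the other, nor the conditions $I_{2}=\psi_{2}(I_{2})$ and $I_{1}\neq\psi_{1}(I_{1})$. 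In particular, nothing in your stated argument excludes the a priori possibility that both components are $\psi_{i}$-2-absorbing quasi primary and neither is quasi primary. Extracting (iii)/(iv) requires cross-coordinate element arguments --- e.g. from $a\in I_{1}-\psi_{1}(I_{1})$ and $xy\in I_{2}$ one tests $(a,1)(1,x)(1,y)=(a,xy)\in I-\phi(I)$ and uses $I_{1}\neq R_{1}$ to conclude that $\sqrt{I_{2}}$ is prime --- or, as the paper does, a reduction of $I/\phi(I)$ inside $R/\phi(I)\cong R_{1}/\psi_{1}(I_{1})\times R_{2}/\psi_{2}(I_{2})$ to Theorem \ref{weak3} via Propositions \ref{pq} and \ref{pq2}. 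You announce exactly this quotient reduction in your set-up but never use it, and you never invoke Theorem \ref{weak3} (or \ref{weak2}), which is where the dichotomy between (iii) and (iv) actually comes from.

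A secondary but concrete technical error: your test triples are degenerate. The product $(a,1)(b,1)(c,1)=(abc,1)$ lies in $I_{1}\times I_{2}$ only if $1\in I_{2}$, i.e. $I_{2}=R_{2}$, and similarly $(1,x)(1,y)(1,z)$ lies in $I$ only if $I_{1}=R_{1}$; so in the main case these triples are not even in $I$ and say nothing. One must instead use mixed triples such as $(a,1)(b,1)(c,d)$ with $d\in I_{2}$. By contrast, your converse sketch for (iii)/(iv) is essentially sound: a direct check combining primeness of $\sqrt{I_{2}}$ with the three groupings of $a_{1}b_{1}c_{1}$ in the $\psi_{1}$-quasi primary condition does go through (and is a genuinely different, more computational route than the paper's, which again passes through Theorem \ref{weak3} and the quotient). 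But as written, the forward direction does not constitute a proof.
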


\begin{proof}
Suppose that $I_{1}\times I_{2}$ is a $\phi$-2-absorbing quasi primary ideal
that is not 2-absorbing quasi primary. Then $\phi(I)\neq\emptyset.$ Let
$I_{1}=R_{1}.$ Then $\psi_{1}(R_{1})=R_{1}$ and $I_{2}$ is a $\psi_{2}%
$-2-absorbing quasi primary ideal of $R_{2}$ that is not a 2-absorbing quasi
primary ideal of $R_{2}$ by Theorem \ref{car2}. Let $I_{2}=R_{2}.$ Then
$\psi_{2}(R_{2})=R_{2}$ and $I_{1}$ is a $\psi_{1}$-2-absorbing quasi primary
ideal of $R_{1}$ that is not a 2-absorbing quasi primary ideal of $R_{1}$ by
Theorem \ref{car2}. Hence assume that $I_{1}\neq R_{1}$ and $I_{2}\neq R_{2}.$
Since $\phi(I)\neq I_{1}\times I_{2},$ we obtain that $I/\phi(I)$ is a nonzero
weakly 2-absorbing quasi primary ideal of $R/\phi(I)$ that is not a
2-absorbing quasi primary by Proposition \ref{pq}(ii). Thus $I_{1}/\psi
_{1}(I_{1})\times I_{2}/\psi_{2}(I_{2})$ is a nonzero weakly 2-absorbing quasi
primary ideal of $R_{1}/\psi_{1}(I_{1})\times R_{2}/\psi_{2}(I_{2})$ that is
not a 2-absorbing quasi primary. Then by Theorem \ref{weak3}, we know that one
of the following cases holds:

Case 1: $I_{1}/\psi_{1}(I_{1})=\psi_{1}(I_{1})/\psi_{1}(I_{1})$ is a quasi
primary ideal of $R_{1}/\psi_{1}(I_{1})$ and $I_{2}/\psi_{2}(I_{2})$ is a
non-zero weakly quasi primary ideal of $R_{2}/\psi_{2}(I_{2})$ that is not
quasi primary.

Case 2: $I_{2}/\psi_{2}(I_{2})=\psi_{2}(I_{2})/\psi_{2}(I_{2})$ is a quasi
primary ideal of $R_{2}/\psi_{2}(I_{2})$ and $I_{1}/\psi_{1}(I_{1})$ is a
non-zero weakly quasi primary ideal of $R_{1}/\psi_{1}(I_{1})$ that is not
quasi primary.

Thus, (iii) or (iv) holds by Proposition \ref{pq}(i) and Proposition \ref{pq2}(i).

Conversely, assume that $\phi(I)\neq\emptyset.$ If (i) or (ii) holds, then
$I_{1}\times I_{2}$ is $\phi$-2-absorbing quasi primary that is not
2-absorbing quasi primary by Theorem \ref{car2}. Assume that (iii) or (iv)
holds, then $I/\phi(I)$ is a non-zero weakly 2-absorbing quasi primary ideal
of $R/\phi(I)$ that is not a 2-absorbing quasi primary by Theorem \ref{weak3}.
Thus $I_{1}\times I_{2}$ is $\phi$-2-absorbing quasi primary that is not
2-absorbing quasi primary of $R$ by Proposition \ref{pq}(ii) and Proposition
\ref{pq2}(ii).
\end{proof}

\begin{theorem}
\label{car4}Let $R_{1}$ and $R_{2}$ be commutative rings with identity and
$I_{1}$,$I_{2}$ be nonzero ideals of $R_{1}$ and $R_{2}$, respectively. Let
$R=R_{1}\times R_{2}$ and $\psi_{i}:L(R_{i})\rightarrow L(R_{i})\cup
\{\emptyset\}$ $(i=1,2)$ be functions such that $\psi_{1}(I_{1})\neq I_{1}$
and$\ \psi_{2}(I_{2})\neq I_{2}.$ Suppose that $\phi=\psi_{1}\times\psi_{2}$
and $I_{1}\times I_{2}$ is a proper ideal of $R.$ Then the following
assertions are equivalent:

(i)$\ I_{1}\times I_{2}$ is a $\phi$-2-absorbing quasi primary ideal of $R.$

(ii)\ Either $I_{1}=R_{1}$ and $I_{2}$ is a 2-absorbing quasi primary ideal of
$R_{2}$ or $I_{2}=R_{2}$ and $I_{1}$ is a 2-absorbing quasi primary ideal of
$R_{1}$ or $I_{1}$, $I_{2}$ are quasi primary ideals of $R_{1}$ and $R_{2}$, respectively.

(iii) $I_{1}\times I_{2}$ is a 2-absorbing quasi primary ideal of $R.$
\end{theorem}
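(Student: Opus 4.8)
The plan is to obtain all three equivalences from the structural results already proved in this section, the genuine content being the implication $(i)\Rightarrow(iii)$. I would first record $(ii)\Leftrightarrow(iii)$ directly from \cite[Theorem 2.23]{qua}, which is precisely the product description of $2$-absorbing quasi primary ideals of $R_{1}\times R_{2}$. The implication $(iii)\Rightarrow(i)$ is then immediate: since $\phi_{\emptyset}\leq\phi$, Proposition \ref{pp}(iv) shows that every $2$-absorbing quasi primary ideal is automatically $\phi$-$2$-absorbing quasi primary.

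For $(i)\Rightarrow(iii)$ the key point is that the standing hypotheses $\psi_{1}(I_{1})\neq I_{1}$ and $\psi_{2}(I_{2})\neq I_{2}$ are exactly what excludes the properly-$\phi$ behaviour classified in Theorem \ref{car3}. Assume $I_{1}\times I_{2}$ is $\phi$-$2$-absorbing quasi primary and suppose, for contradiction, that it is not $2$-absorbing quasi primary. Under the normalisation $\phi(I)\subseteq I$, each inequality $\psi_{i}(I_{i})\neq I_{i}$ guarantees $\phi(I)=\psi_{1}(I_{1})\times\psi_{2}(I_{2})\neq I_{1}\times I_{2}$, so the hypothesis of Theorem \ref{car3} holds and that theorem forces $\phi(I)\neq\emptyset$ together with one of its four alternatives. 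I would eliminate each alternative in turn: (iii) and (iv) assert $I_{2}=\psi_{2}(I_{2})$ and $I_{1}=\psi_{1}(I_{1})$, contradicting the hypotheses outright, whereas (i) and (ii) arise in the cases $I_{2}=R_{2}$ and $I_{1}=R_{1}$, where $\psi_{2}(R_{2})=R_{2}$ forces $\psi_{2}(I_{2})=\psi_{2}(R_{2})=R_{2}=I_{2}$ (and symmetrically for $I_{1}$), again a contradiction. Hence $I_{1}\times I_{2}$ must be $2$-absorbing quasi primary.

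The anticipated obstacle is bookkeeping rather than any deep idea. One must match the implicit constraints attached to alternatives (i),(ii) of Theorem \ref{car3}, namely that they arise (through Theorem \ref{car2}) precisely from $I_{2}=R_{2}$ and $I_{1}=R_{1}$; to sidestep any ambiguity I would prefer to treat these two subcases directly through Theorem \ref{car}, whose running assumption $\psi_{2}(R_{2})\neq R_{2}$ is exactly what $\psi_{2}(I_{2})\neq I_{2}$ becomes when $I_{2}=R_{2}$, so that Theorem \ref{car} delivers $(i)\Leftrightarrow(iii)$ for $I_{1}\times R_{2}$ at once. One should also note that the degenerate value $\phi(I)=\emptyset$ poses no problem, since it directly contradicts the $\phi(I)\neq\emptyset$ produced by Theorem \ref{car3} (equivalently, $I-\phi(I)=I$ then makes the two conditions coincide). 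With these verifications every escape route is closed and the three statements are equivalent.
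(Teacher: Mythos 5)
Your overall architecture---$(ii)\Leftrightarrow(iii)$ from \cite[Theorem 2.23]{qua}, $(iii)\Rightarrow(i)$ from Proposition \ref{pp}(iv) since $\phi_{\emptyset}\leq\phi$, with the real content in $(i)\Rightarrow(iii)$---is sound, and your eliminations of alternatives (iii) and (iv) of Theorem \ref{car3} are correct, since those alternatives explicitly assert $I_{2}=\psi_{2}(I_{2})$ and $I_{1}=\psi_{1}(I_{1})$. The gap is in the elimination of alternatives (i) and (ii). As stated, alternative (i) of Theorem \ref{car3} asserts only that $\psi_{2}(R_{2})=R_{2}$ and that $I_{1}$ is a $\psi_{1}$-2-absorbing quasi primary ideal of $R_{1}$ that is not 2-absorbing quasi primary; it does \emph{not} assert $I_{2}=R_{2}$. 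Your inference $\psi_{2}(I_{2})=\psi_{2}(R_{2})=R_{2}=I_{2}$ is valid only when $I_{2}=R_{2}$, and nothing in the statement of Theorem \ref{car3} supplies that equality; it is visible only inside the proof of \ref{car3}, where alternative (i) is derived (via Theorem \ref{car2}) precisely in the subcase $I_{2}=R_{2}$, a constraint the statement then forgets. Your workaround via Theorem \ref{car} correctly disposes of the subcases $I_{2}=R_{2}$ and $I_{1}=R_{1}$, but it does not repair the real problem: in the remaining case $I_{1}\neq R_{1}$ and $I_{2}\neq R_{2}$, the statement of Theorem \ref{car3} still offers alternatives (i) and (ii) as possibilities, and neither one, read literally, contradicts your hypotheses $\psi_{i}(I_{i})\neq I_{i}$ (the condition $\psi_{2}(R_{2})=R_{2}$ says nothing about $\psi_{2}(I_{2})$ when $I_{2}$ is proper). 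So the contradiction does not materialize from the cited statements alone; you are tacitly using a strengthened version of Theorem \ref{car3} that you have not proved.

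The gap is closable, and the repair is essentially the paper's own argument, which bypasses Theorem \ref{car3} entirely. After dispatching the trivial case $\phi(I)=\emptyset$ (as you do), the hypotheses $\psi_{1}(I_{1})\neq I_{1}$ and $\psi_{2}(I_{2})\neq I_{2}$ guarantee that $I/\phi(I)=I_{1}/\psi_{1}(I_{1})\times I_{2}/\psi_{2}(I_{2})$ has \emph{both} components nonzero; by Proposition \ref{pq}(ii) this quotient is a weakly 2-absorbing quasi primary ideal of $R/\phi(I)=R_{1}/\psi_{1}(I_{1})\times R_{2}/\psi_{2}(I_{2})$, so Theorem \ref{weak2} applies and yields that it is in fact 2-absorbing quasi primary (together with its classification, which gives statement (ii) of the present theorem after pulling back with Proposition \ref{pq2}), and Proposition \ref{pq2}(ii) then transfers this back to $I_{1}\times I_{2}$ in $R$. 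In other words, the nonzero-components hypothesis of Theorem \ref{weak2} is exactly what $\psi_{i}(I_{i})\neq I_{i}$ buys you after passing to the quotient; invoking \ref{weak2} directly avoids having to recover the constraints suppressed in the statement of \ref{car3}, which is where your attempt breaks down.
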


\begin{proof}
Assume that $\psi_{1}(I_{1})=\emptyset$ or $\psi_{2}(I_{2})=\emptyset.$ Then
clearly $\phi(I_{1}\times I_{2})=\emptyset$ so that $(i)\Leftrightarrow
(ii)\Leftrightarrow(iii)\ $follows from \cite[Theorem 2.23]{qua}. Hence
suppose that $\psi_{1}(I_{1})\neq\emptyset$ and $\psi_{2}(I_{2})\neq
\emptyset,$ so $\phi(I_{1}\times I_{2})\neq\emptyset.$

$(i)\Rightarrow(ii):$ Let $I_{1}\times I_{2}$ be a $\phi$-2-absorbing quasi
primary ideal of $R.$ Thus $I_{1}/\psi_{1}(I_{1})\times I_{2}/\psi_{2}(I_{2})$
is a non-zero weakly 2-absorbing quasi primary ideal of $R_{1}/\psi_{1}%
(I_{1})\times R_{2}/\psi_{2}(I_{2})$ by Proposition \ref{pq}(ii). Then by
Theorem \ref{weak2}, we know that one of the following cases holds:

\textbf{Case 1}: $I_{1}/\psi_{1}(I_{1})=R_{1}/\psi_{1}(I_{1})$ and $I_{2}%
/\psi_{2}(I_{2})$ is a 2-absorbing quasi primary ideal of $R_{2}/\psi
_{2}(I_{2}).$\ Then we have $I_{1}=R_{1}\ $and $I_{2}$ is a 2-absorbing quasi
primary ideal of $R_{2}.$

\textbf{Case 2}: $I_{2}/\psi_{2}(I_{2})=R_{2}/\psi_{2}(I_{2})$ and $I_{1}%
/\psi_{1}(I_{1})$ is a 2-absorbing quasi primary ideal of $R_{1}/\psi
_{1}(I_{1})$.\ Similar to Case 1, $I_{2}=R_{2}\ $and $I_{1}$ is a 2-absorbing
quasi primary ideal of $R_{1}.$

\textbf{Case 3}: $I_{1}/\psi_{1}(I_{1})$ and $I_{2}/\psi_{2}(I_{2})$ are quasi
primary of $R_{1}/\psi_{1}(I_{1}),R_{2}/\psi_{2}(I_{2}),$ respectively. Then
$I_{1},I_{2}\ $are quasi primary ideals of $R_{1}\ $and $R_{2},\ $%
respectively\ by Proposition \ref{pq2}(ii).

$(ii)\Rightarrow(iii):$ Assume that $I_{1}=R_{1}$ and $I_{2}$ is a 2-absorbing
quasi primary ideal of $R_{2}$ or $I_{2}=R_{2}$ and $I_{1}$ is a 2-absorbing
quasi primary ideal of $R_{1}$ or $I_{1}$, $I_{2}$ are quasi primary ideals of
$R_{1}$ and $R_{2}$, respectively. Then by Theorem Theorem \cite[Theorem
2.23]{qua}, $I_{1}\times I_{2}$ is a 2-absorbing quasi primary ideal of $R.$

$(iii)\Rightarrow(i):$ It is evident.
\end{proof}

\end{document}